\newtheorem{theorem}{Theorem}
\newtheorem{lemma}{Lemma}
\theoremstyle{definition}
\newtheorem{claim}{Claim}
\newtheorem{remark}{Remark}
\newtheorem{case}{Case}
\begin{document}

\title{\bf\Large Spectral radius and traceability of connected claw-free graphs}

\date{}

\author{Bo Ning$^a$\footnote{E-mail address: ningbo\_math84@mail.nwpu.edu.cn.
Supported by NSFC (No. 11271300) and the Doctorate Foundation of
Northwestern Polytechnical University (cx201326).}~ and Binlong
Li$^{a,b}$\thanks{Corresponding author. Email address:
libinlong@mail.nwpu.edu.cn. Supported by NSFC (No. 11271300) and the
Doctorate Foundation of Northwestern Polytechnical University
(cx201202), and the project NEXLIZ - CZ.1.07/2.3.00/30.0038, which
is co-financed by the European Social Fund and the state budget of
the Czech Republic.}\\[2mm]
\small $^a$ Department of Applied Mathematics,
\small Northwestern Polytechnical University,\\
\small Xi'an, Shaanxi 710072, P.R.~China\\
\small $^b$ Department of Mathematics, University of West Bohemia,\\
\small Univerzitn\'i 8, 306 14 Plze\v n, Czech Republic\\}
\maketitle
\maketitle

\begin{abstract}
Let $G$ be a connected claw-free graph on $n$ vertices and
$\overline{G}$ be its complement graph. Let $\mu(G)$ be the spectral
radius of $G$. Denote by $N_{n-3,3}$ the graph consisting of
$K_{n-3}$ and three disjoint
pendent edges. In this note we prove that: \\
(1) If $\mu(G)\geq n-4$, then $G$ is traceable unless
$G=N_{n-3,3}$. \\
(2) If $\mu(\overline{G})\leq \mu(\overline{N_{n-3,3}})$ and $n\geq
24$, then $G$ is traceable unless $G=N_{n-3,3}$. \\
Our works are counterparts on claw-free graphs of previous theorems
due to Lu et al., and Fiedler and Nikiforov, respectively.
\end{abstract}

\medskip
\noindent {\bf Keywords:} Spectral radius;
Traceability; Claw-free graph

\smallskip
\noindent {\bf Mathematics Subject Classification (2010)}: 05C50;
05C45; 05C35

\section{Introduction}

Let $G$ be a graph. The \emph{eigenvalues} of $G$ are the
eigenvalues of the adjacency matrix of $G$. Since the adjacency
matrix of $G$ is real and symmetric, all its eigenvalues are real.
The \emph{spectral radius} of $G$, denoted by $\mu(G)$, is the
spectral radius of its adjacency matrix, i.e., the maximum among the
absolute values of its eigenvalues. By Perron-Frobenius' theorem
(see Theorem 0.3 of \cite{CDS}), $\mu(G)$ is equal to the largest
eigenvalue of $G$.

Let $G$ be a graph. We use $e(G)$ to denote \emph{the number of
edges} of $G$. Let $S\subset V(G)$. We use $G[S]$ to denote the
subgraph of $G$ induced by $S$ and $G-S$ to denote the subgraph of
$G$ induced by $V(G)\backslash S$. For a subgraph $H$ of $G$, we use
$G-H$ instead of $G-V(H)$. For two subgraphs $H,H'$ of $G$, we use
$e_G(H,H')$ (or shortly, $e(H,H')$) to denote the number of edges
with one vertex in $H$ and the other one in $H'$.

By $\overline{G}$ we denote the \emph{complement} of $G$. Let $G_1$
and $G_2$ be two graphs. We denote by $G_1+G_2$ the \emph{disjoint
union} of $G_1$ and $G_2$, and by $G_1\vee G_2$ the \emph{join} of
$G_1$ and $G_2$.

A graph $G$ is \emph{traceable} if it has a Hamilton path, i.e., a
path containing all vertices of $G$; and $G$ is \emph{Hamiltonain}
if it has a Hamilton cycle, i.e., a cycle containing all vertices of
$G$. Note that every Hamiltonian graph is traceable. Hamiltonian
properties of graphs have received much attention from graph
theorists. A fundamental theorem due to Dirac \cite{D} states that
every graph on $n$ vertices is traceable if the degree of every
vertex is at least $(n-1)/2$. Up to now, there also has been some
references on the spectral conditions for Hamilton paths or cycles.
We refer the reader to \cite{BC,FN,Hev,LLT,NG,Z}.

In particular, Fiedler and Nikiforov \cite{FN} gave tight sufficient
conditions for the existence of a Hamilton path in terms of the
spectral radii of a graph and its complement.

\begin{theorem}[Fiedler and Nikiforov \cite{FN}]\label{ThFiNi}
Let $G$ be a graph on $n$ vertices. If $\mu(G)\geq n-2$, then $G$ is
traceable unless $G=K_{n-1}+K_1$.
\end{theorem}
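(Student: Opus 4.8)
The plan is to pass from the spectral hypothesis to an edge count, and then to close the argument with two extremal facts, one spectral and one purely combinatorial. First I would invoke Stanley's inequality $\mu(G)\le \tfrac12\bigl(-1+\sqrt{8e(G)+1}\bigr)$, valid for every graph and equivalent to $\mu(G)^2+\mu(G)\le 2e(G)$. Feeding in the hypothesis $\mu(G)\ge n-2$ yields $(n-2)(n-1)\le 2e(G)$, that is, $e(G)\ge\binom{n-1}{2}$. Thus the spectral condition forces $G$ to be edge-dense, and the problem is reduced to understanding dense non-traceable graphs.

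Next I would split according to whether this edge bound is strict. If $e(G)\ge\binom{n-1}{2}+1$, then I would appeal to the traceability analogue of Ore's edge condition: a graph on $n$ vertices with more than $\binom{n-1}{2}$ edges is traceable. This follows from Ore's Hamiltonicity bound applied to $G\vee K_1$, using that $G$ is traceable if and only if $G\vee K_1$ is Hamiltonian; hence in this regime $G$ is traceable and there is nothing to prove. The only remaining possibility is $e(G)=\binom{n-1}{2}$. Here a direct computation gives $8e(G)+1=(2n-3)^2$, so Stanley's bound reads $\mu(G)\le n-2$, and combined with the hypothesis $\mu(G)\ge n-2$ this forces $\mu(G)=n-2$ with equality throughout.

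It then remains to identify the graphs attaining equality, which is where the real work lies. I would use the equality case of Stanley's inequality: equality holds only when $e(G)=\binom{k}{2}$ is a triangular number and $G$ is the disjoint union of $K_k$ with isolated vertices. With $e(G)=\binom{n-1}{2}$ we get $k=n-1$, and since $G$ has exactly $n$ vertices the complete part must be $K_{n-1}$ together with a single isolated vertex; that is, $G=K_{n-1}+K_1$, which is indeed non-traceable. This closes the argument, since in every other case $G$ is traceable.

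The main obstacle is the equality analysis at the boundary $e(G)=\binom{n-1}{2}$: everything hinges on the sharpness of Stanley's inequality together with a clean description of its extremal graphs, and on $\binom{n-1}{2}$ being the exact edge threshold for traceability, so that the two regimes meet with no gap. A self-contained alternative that avoids quoting the equality case is to show directly that a non-traceable graph with $\mu(G)\ge n-2$ can only be $K_{n-1}+K_1$: in an edge-maximal non-traceable graph every two non-adjacent vertices have degree sum at most $n-2$ (the Bondy--Chv\'atal closure condition for Hamilton paths), which constrains the degree sequence tightly, and one can then bound $\mu(G)$ from this degree information by a quotient-matrix (equitable partition) estimate, showing $\mu(G)<n-2$ for every such graph other than $K_{n-1}+K_1$. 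This second route trades the extremal-spectral input for a more hands-on eigenvalue computation, but the delicate point is the same, namely ruling out all near-extremal competitors at the threshold.
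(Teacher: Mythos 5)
The paper does not prove this statement---it is quoted verbatim from Fiedler and Nikiforov \cite{FN}---so there is no internal proof to compare against. Your argument is correct and is essentially the original one: Stanley's bound $\mu(G)^2+\mu(G)\le 2e(G)$ yields $e(G)\ge\binom{n-1}{2}$, the Ore-type count $e(G)\ge\binom{n-1}{2}+1$ forces traceability via the Hamiltonicity of $G\vee K_1$, and the boundary case $e(G)=\binom{n-1}{2}$ is settled by the equality characterization of Stanley's inequality, which leaves only $K_{n-1}+K_1$.
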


\begin{theorem}[Fiedler and Nikiforov \cite{FN}]\label{ThFiNi'}
Let $G$ be a graph on $n$ vertices. If $\mu(\overline{G})\leq
\sqrt{n-1}$, then $G$ is traceable unless $G=K_{n-1}+K_1$.
\end{theorem}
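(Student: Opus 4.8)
The plan is to argue the contrapositive: assume $G$ is not traceable and $G\neq K_{n-1}+K_1$, and show that $\mu(\overline{G})>\sqrt{n-1}$. A tempting shortcut is to feed Theorem~\ref{ThFiNi} using the Nordhaus--Gaddum-type bound $\mu(G)\ge n-1-2e(\overline{G})/n\ge n-1-\mu(\overline{G})$ (the first inequality comes from testing the Rayleigh quotient of $A(G)=J-I-A(\overline{G})$ on the all-ones vector, the second from comparing average degree with spectral radius). However, since $\mu(\overline{G})$ is allowed to be as large as $\sqrt{n-1}$, this only yields $\mu(G)\ge n-1-\sqrt{n-1}$, which falls short of the hypothesis $\mu(G)\ge n-2$ of Theorem~\ref{ThFiNi} by roughly $\sqrt{n-1}$. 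Hence no clean reduction is available and one must work structurally on $\overline{G}$ itself. The guiding fact is that the extremal object is the star: $\overline{K_{n-1}+K_1}=K_{1,n-1}$ has $\mu=\sqrt{n-1}$ \emph{exactly}, so the real content is that any other non-traceable $G$ forces the complement strictly above this threshold.

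The engine is the path version of the theorem of Chv\'atal and Erd\H{o}s, namely that $\kappa(G)\ge\alpha(G)-1$ implies traceability; thus non-traceability gives $\alpha(G)\ge k+2$, where $k:=\kappa(G)$. This hands us two subgraphs of $\overline{G}$ for free. First, a maximum independent set of $G$ is a clique of $\overline{G}$, so $\mu(\overline{G})\ge\alpha(G)-1\ge k+1$. Second, fixing a minimum cut $S$ (so $|S|=k$) and letting the smallest component of $G-S$ have order $a$, every edge between that component and the remaining $n-k-a$ vertices outside $S$ lies in $\overline{G}$, so $\overline{G}\supseteq K_{a,\,n-k-a}$ and $\mu(\overline{G})\ge\sqrt{a(n-k-a)}$. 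I would then split into cases. If $k\ge\sqrt{n-1}$, the clique bound already gives $\mu(\overline{G})\ge k+1>\sqrt{n-1}$. If $k<\sqrt{n-1}$ but the chosen cut leaves every component of $G-S$ of order at least $2$, then $a\ge2$ and $\sqrt{a(n-k-a)}\ge\sqrt{2(n-k-2)}>\sqrt{n-1}$, since $k<\sqrt{n-1}\le(n-3)/2$ for all but finitely many $n$. These two cases dispose of everything except the thin configuration, which is precisely the neighbourhood of the star.

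The main obstacle is therefore the remaining case, where $k<\sqrt{n-1}$ and the minimum cut $S$ leaves a singleton component $\{v\}$. If $k=0$ then $G$ is disconnected, $v$ is isolated, and $\overline{G}\supseteq K_{1,n-1}$; here either $G-v$ is complete, giving the unique exception $G=K_{n-1}+K_1$, or $G-v$ has a non-edge, which is an extra edge of $\overline{G}$ beyond the star and forces $\mu(\overline{G})>\sqrt{n-1}$. The hard subcase is $k\ge1$: minimality of $S$ forces $N_G(v)=S$, so $v$ has $\overline{G}$-degree exactly $n-1-k$ and $N_{\overline{G}}(v)$ is exactly the big component $C$, giving only $\mu(\overline{G})\ge\sqrt{n-1-k}$, still below the target. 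One checks that if $\overline{G}$ were the bare star $K_{1,n-1-k}$ centred at $v$ then $G=K_{n-1}\vee$-style graph $\bigl(S\cup C=K_{n-1}$ complete, $v$ joined to all of $S\bigr)$ would be traceable; so non-traceability forces additional edges of $\overline{G}$ among $C\cup S$. The crux is to show these forced edges raise the spectral radius past $\sqrt{n-1}$: a single extra edge among the leaves is provably insufficient (a star with one extra leaf-edge still has spectral radius below $\sqrt{n-1}$ for $k\ge1$), so the delicate part is a stability argument showing that failing a Hamilton path while keeping $\overline{G}$ this star-like forces either a clique of $\overline{G}$ inside $C$ large enough to make the complete-split subgraph $\{v\}\cup(\text{clique})$ beat $\sqrt{n-1}$ (whose largest eigenvalue is the top root of a cubic comparable to $\sqrt{n-1}$), or a dense enough $G[C]$ that recreates traceability. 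I expect pinning down this near-star regime, and certifying that the disconnected $K_{n-1}+K_1$ is the unique configuration attaining equality, to be the heart of the proof, completed by a direct check of the finitely many small $n$.
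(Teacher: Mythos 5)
The paper never proves this statement---it is quoted from \cite{FN}---so the only internal benchmark is the proof of Theorem~\ref{ThComponent}, which the authors explicitly model on the Fiedler--Nikiforov argument. Measured against that, your proposal is both a different route and, more importantly, not yet a proof. The case analysis is sound up to and including the subcases $k\ge\sqrt{n-1}$, ``all fragments of $G-S$ have order $\ge 2$,'' and $k=0$; but the remaining case ($1\le k<\sqrt{n-1}$ with a singleton fragment $\{v\}$) is exactly where the theorem lives, and there you stop at a promissory note. You correctly observe that everything the Chv\'atal--Erd\H{o}s machinery certifies inside $\overline{G}$ in that regime---the star $K_{1,n-1-k}$ at $v$ and a clique $K_{k+2}$ coming from $\alpha(G)\ge k+2$---fails to reach the threshold: for $k=1$ their union is (at best) $K_1\vee(K_2+(n-4)K_1)$, whose largest eigenvalue is the top root of $\lambda^3-\lambda^2-(n-2)\lambda+(n-4)$ and lies strictly below $\sqrt{n-1}$ for all large $n$. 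Non-traceability of $G$ must indeed force further edges of $\overline{G}$, but your proposal contains no mechanism for locating them or for quantifying their contribution to $\mu(\overline{G})$; the ``stability argument'' you defer to is the entire content of the theorem. That is a genuine gap, not a missing detail.

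For comparison, the actual proof (the one Theorem~\ref{ThComponent} imitates) avoids connectivity altogether. If $G$ is not traceable, pass to the Bondy--Chv\'atal-type closure $G'$ in which every nonadjacent pair satisfies $d_{G'}(u)+d_{G'}(v)\le n-2$; then every edge $uv$ of $\overline{G'}$ satisfies $d_{\overline{G'}}(u)+d_{\overline{G'}}(v)\ge n$, so $\sum_v d_{\overline{G'}}^2(v)\ge n\,e(\overline{G'})$ and Theorem~\ref{LeHo'} gives $\mu(\overline{G})\ge\mu(\overline{G'})\ge\sqrt{e(\overline{G'})}$. Either $e(\overline{G'})\ge n$, which already beats $\sqrt{n-1}$, or $e(G')\ge\binom{n}{2}-(n-1)=\binom{n-1}{2}$, and an elementary extremal edge-count lemma (the analogue of Lemma~\ref{LeEdge} here) pins down $K_{n-1}+K_1$ as the unique non-traceable graph with that many edges. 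This converts non-traceability into a global degree-sum statement over \emph{all} complement edges rather than a single cut, which is precisely the leverage your near-star case is missing; if you want to salvage your approach, you would need an analogous global input in the $1\le k<\sqrt{n-1}$ regime.
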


\begin{remark}\label{r1}
Note that $\mu(K_{n-1}+K_1)=\mu(K_{n-1})=n-2$ and
$\mu(\overline{K_{n-1}+K_1})=\mu(K_{1,n-1})=\sqrt{n-1}$.
\end{remark}

Since the connectedness is necessary for studying traceability of
graphs. Lu, Liu and Tian \cite{LLT} presented a sufficient condition
for a connected graph to be traceable.
\begin{theorem}[Lu, Liu and Tian \cite{LLT}]\label{ThLuLiTi}
Let $G$ be a connected graph of order $n\geq 7$. If $\mu(G)\geq
\sqrt{(n-3)^2+3}$, then $G$ is traceable.
\end{theorem}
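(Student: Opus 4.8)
The plan is to argue by contradiction: suppose $G$ is connected and non-traceable yet $\mu(G)\ge\sqrt{(n-3)^2+3}$, and reach a contradiction by first bounding $e(G)$ from above using non-traceability and then converting this into a bound on $\mu(G)$. A convenient first reduction comes from Theorem~\ref{ThFiNi}: the only exceptional graph there, $K_{n-1}+K_1$, is disconnected, so if $\mu(G)\ge n-2$ then the connected graph $G$ would be traceable; hence we may assume $G$ lies in the narrow window $\sqrt{(n-3)^2+3}\le\mu(G)<n-2$. The real content, however, is purely extremal, and the spectral input will be only a single classical inequality.

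The combinatorial heart is the following edge bound, which I expect to be the main obstacle: \emph{every connected non-traceable graph on $n\ge 7$ vertices satisfies $e(G)\le\binom{n-3}{2}+n-1$, with equality only for $G=K_1\vee(K_{n-3}+2K_1)$.} The intuition is that a Hamilton path, after deleting a set $S$, breaks into at most $|S|+1$ pieces, so a graph with a separator $S$ satisfying $c(G-S)\ge|S|+2$ (where $c(\cdot)$ counts components) is non-traceable; to maximize edges one makes $S$ and every component a clique with $S$ joined to everything, yielding the family $K_s\vee(K_{n-2s-1}+(s+1)K_1)$, and a short optimization over $s$ shows that for $n\ge 7$ the densest case is $s=1$, i.e. $K_1\vee(K_{n-3}+2K_1)$. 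To turn this intuition into a proof I would pass to an edge-maximal counterexample (so $G$ becomes maximally non-traceable) and adjoin a universal vertex $w$, using that $G$ is non-traceable if and only if $G\vee K_1$ is non-Hamiltonian; the Bondy--Chv\'atal closure and the structure of edge-maximal non-Hamiltonian graphs then force the separated form above. The genuinely delicate point is that non-traceability need \emph{not} be caused by such a separator, so one really needs an extremal statement of Kopylov / Erd\H{o}s--Gallai type for connected graphs with no spanning path to rule out denser, ``tough'' non-traceable graphs.

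With this bound in hand the conclusion is immediate from the classical inequality $\mu(G)\le\sqrt{2e(G)-n+1}$, valid for every connected graph (with equality only for $K_n$ and the star $K_{1,n-1}$). Indeed,
\[
\mu(G)^2\le 2e(G)-n+1\le 2\Big(\binom{n-3}{2}+n-1\Big)-n+1=(n-3)^2+2,
\]
so $\mu(G)\le\sqrt{(n-3)^2+2}<\sqrt{(n-3)^2+3}$, contradicting the hypothesis. It is worth stressing that the threshold is calibrated exactly to the extremal graph $K_1\vee(K_{n-3}+2K_1)$: the edge bound must be invoked in its \emph{tight} form, since any weaker estimate would fail to fall below $\sqrt{(n-3)^2+3}$, and the hypothesis $n\ge 7$ is precisely what makes the $s=1$ configuration dominate the competing $s\ge 2$ ones in the edge count. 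The equality cases of the spectral inequality cause no difficulty here, as the star is far too sparse to be a candidate, so it is only ever used as an upper bound.
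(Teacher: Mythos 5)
First, a point of reference: the paper does not prove Theorem~\ref{ThLuLiTi} at all --- it is quoted from Lu, Liu and Tian \cite{LLT} --- so there is no in-paper proof to compare against. That said, your architecture (an extremal edge bound for connected non-traceable graphs, fed into Hong's inequality $\mu(G)\le\sqrt{2e(G)-n+1}$) is exactly the standard route, and it mirrors how the paper itself proves Theorems~\ref{ThMuG} and~\ref{ThComponent} via Lemmas~\ref{LeEdge} and~\ref{LeEdge'}. Your arithmetic is correct: $2\bigl(\binom{n-3}{2}+n-1\bigr)-n+1=(n-3)^2+2<(n-3)^2+3$, so the stated edge bound does finish the proof, and your observation that $n\ge 7$ is precisely where the $s=1$ separator configuration overtakes $s\ge 2$ is also correct (the gap in edge counts is $n-7$).

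The one genuine gap is the combinatorial heart itself: the claim that every connected non-traceable graph on $n\ge 7$ vertices has $e(G)\le\binom{n-3}{2}+n-1$ is \emph{asserted with a heuristic and a plan, not proved}. You correctly flag that non-traceability need not come from a separator $S$ with $c(G-S)\ge|S|+2$, so the ``optimize over $K_s\vee(K_{n-2s-1}+(s+1)K_1)$'' computation is only a lower-bound construction, not an upper bound on $e(G)$. The lemma is in fact true, and your proposed route can be made to work: pass to $H=G\vee K_1$ on $n+1$ vertices, which is non-Hamiltonian and has $\delta(H)\ge 2$ since $G$ is connected; Erd\H{o}s's 1962 theorem (or an Ore/Bondy--Chv\'atal closure argument on $H$) then gives $e(H)\le\binom{n-1}{2}+4$ for $n+1\ge 8$, i.e.\ $e(G)=e(H)-n\le\binom{n-3}{2}+n-1$, exactly the bound you need. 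Until that lemma is established by an actual argument of this kind, the proof is incomplete; everything else in your write-up (the reduction via Theorem~\ref{ThFiNi}, the use of Hong's inequality as a pure upper bound, dismissing its equality cases) is fine and, as you note, the uniqueness part of the extremal statement is never needed.
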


Lu et al.'s lower bound of spectral radius was sharpened in
\cite{NG}.

\begin{theorem}[Ning and Ge \cite{NG}]\label{ThNiGe}
Let $G$ be a connected graph on $n\geq 7$ vertices. If $\mu(G)\geq
n-3$, then $G$ is traceable unless $G=K_1\vee (K_{n-3}+2K_1)$.
\end{theorem}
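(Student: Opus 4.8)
My plan is to prove the statement by contradiction, isolating the extremal graph through one spectral inequality and one combinatorial edge bound. Suppose $G$ is connected, non-traceable, on $n\ge 7$ vertices, with $\mu(G)\ge n-3$, and suppose $G\neq K_1\vee(K_{n-3}+2K_1)$; I aim to reach a contradiction. Put $m=e(G)$. Since complete graphs are traceable, $G\neq K_n$; and since $\mu(K_{1,n-1})=\sqrt{n-1}<n-3$ for $n\ge 7$, the hypothesis $\mu(G)\ge n-3$ rules out $G=K_{1,n-1}$ as well.

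The first ingredient is Hong's inequality $\mu(G)\le\sqrt{2m-n+1}$, valid for connected graphs, whose equality case consists exactly of $K_n$ and $K_{1,n-1}$. As $G$ is neither, the inequality is strict, so $n-3\le\mu(G)<\sqrt{2m-n+1}$. Squaring gives $(n-3)^2<2m-n+1$, that is $m>\binom{n-2}{2}+1$, and since $m$ is an integer, $m\ge\binom{n-2}{2}+2$.

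The second and decisive ingredient is an edge-extremal lemma: every connected non-traceable graph on $n$ vertices satisfies $e(G)\le\binom{n-2}{2}+2$, with equality if and only if $G=K_1\vee(K_{n-3}+2K_1)$. Granting it, the two bounds force $m=\binom{n-2}{2}+2$, and the equality case then gives $G=K_1\vee(K_{n-3}+2K_1)$, contradicting our assumption; this proves the theorem. The exceptional graph is a legitimate exception: it contains $K_{n-2}$, so $\mu\ge n-3$, while its two vertices outside the clique both have degree $1$ with the same unique neighbour, so any Hamilton path would have both as endpoints adjacent to that single vertex, which is impossible.

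It remains to prove the edge-extremal lemma, which is the main obstacle. I would take a longest path $P=v_1\cdots v_p$; non-traceability gives $p\le n-1$, so $V(G)\setminus V(P)\neq\emptyset$. Maximality of $P$ forces $N(v_1)\cup N(v_p)\subseteq V(P)$, and the standard rotation argument forbids any index $i$ with $v_1v_{i+1},v_iv_p\in E(G)$: such a pair would make $V(P)$ span a cycle, which together with an edge from $V(P)$ to a vertex off $P$ (guaranteed by connectedness) yields a path longer than $P$. These restrictions cap the number of edges meeting $V(P)$, and connectedness limits how the off-path vertices can attach, pushing all but one of them toward the degree-one behaviour seen in the extremal graph. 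Turning this structural picture into the exact bound $\binom{n-2}{2}+2$ and then tracking equality throughout is the delicate step; the equality analysis is precisely what singles out $K_1\vee(K_{n-3}+2K_1)$ and excludes near-misses such as $K_2\vee(K_{n-4}+2K_1)$, which is traceable, and $K_1\vee(K_{n-4}+3K_1)$, which has too few edges.
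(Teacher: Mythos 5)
This is a cited result (Ning--Ge \cite{NG}); the paper gives no proof of it, so I can only judge your argument on its own merits. Your frame is sound and is in fact the same one this paper uses for its Theorem \ref{ThMuG}: Hong's bound with its equality characterization ($K_n$ and $K_{1,n-1}$, both excluded here) gives $m\geq\binom{n-2}{2}+2$, and the arithmetic is correct. The difficulty is that you have outsourced all of the real content to the ``edge-extremal lemma,'' which you do not prove, and which is moreover \emph{false as you state it}. The graph $K_2\vee(K_2+3K_1)$ on $n=7$ vertices is connected, is non-traceable (deleting the two join vertices leaves four components, and a path can bridge at most three of them with a $2$-cut), and has exactly $1+1+2\cdot 5=12=\binom{5}{2}+2$ edges, yet it is not $K_1\vee(K_4+2K_1)$. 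So the equality case of your lemma does not ``single out'' $K_1\vee(K_{n-3}+2K_1)$, and the final step of your contradiction collapses at $n=7$.

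Repairing this requires more than a remark: once the extra extremal graph appears, Hong's inequality only gives $\mu\leq\sqrt{2\cdot 12-7+1}=\sqrt{18}\approx 4.24$ for it, which does not contradict $\mu\geq n-3=4$; one must compute its spectral radius directly (via the equitable partition $\{S,K_2,3K_1\}$ one finds $\mu\approx 3.98<4$) to discard it. This is exactly the pattern in the literature: the edge-version lemma for connected non-traceable graphs carries several exceptional graphs, and the spectral theorem is recovered by bounding the spectral radius of each exception separately. Your sketch of the lemma itself (longest path, rotation, ``pushing off-path vertices toward degree one'') is also only a heuristic; it does not rule out non-traceable graphs whose obstruction is a $2$-cut rather than a cut vertex, which is precisely where the missed extremal graph lives. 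As it stands, the proposal is a correct reduction plus an unproven and incorrectly stated lemma, so it does not constitute a proof.
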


The bipartite graph $K_{1,3}$ is called a \emph{claw}. A graph is
called \emph{claw-free} if it contains no induced subgraph
isomorphic to $K_{1,3}$. Claw-free graphs have been a very popular
field of study, not only in the context of Hamiltonian properties.
One reason is that the very natural class of line graphs turns out
to be a subclass of the class of claw-free graphs. However, not
every claw-free graph is Hamiltonian. There are examples of
3-connected non-Hamiltonian claw-free (even line) graphs, but it is
a long-standing conjecture that all 4-connected claw-free graphs are
Hamiltonian (and then, traceable). It is interesting to note that
the lower bound on the degrees in Dirac's theorem for traceability
was lowered to $(n-2)/3$ by Matthews and Sumner \cite{MS} for
claw-free graphs. For a survey on claw-free graphs, we refer the
reader to Faudree et al. \cite{FFR}.

Motivated by the relationship between Dirac's theorem and
Matthews-Sumner's theorem, in this note we will improve the lower
bound in Theorem \ref{ThLuLiTi} and give an analogue of Theorem
\ref{ThFiNi'} for connected claw-free graphs.

Our main results will be listed as follows. By $N_{n-3,3}$ we denote
the graph consisting of a complete graph $K_{n-3}$ with three
disjoint pendent edges.

\begin{center}
\begin{picture}(120,120)

\put(60,50){\put(0,0){\circle{80}} \thicklines
\put(-10,-5){$K_{n-3}$} \put(0,30){\circle*{4}}
\put(0,60){\circle*{4}} \put(0,30){\line(0,1){30}}
\put(20,20){\circle*{4}} \put(40,40){\circle*{4}}
\put(20,20){\line(1,1){20}} \put(-20,20){\circle*{4}}
\put(-40,40){\circle*{4}} \put(-20,20){\line(-1,1){20}}}

\end{picture}

\small Fig. 1. Graph $N_{n-3,3}$.
\end{center}

\begin{theorem}\label{ThMuG}
Let $G$ be a connected claw-free graph on $n$ vertices. If
$\mu(G)\geq n-4$, then $G$ is traceable unless $G=N_{n-3,3}$.
\end{theorem}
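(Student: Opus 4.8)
The plan is to argue by contradiction: I assume that $G$ is a connected claw-free graph on $n$ vertices (with $n$ large enough for $N_{n-3,3}$ to be defined) with $\mu(G)\ge n-4$ that is not traceable, and I try to force $G=N_{n-3,3}$. First it is worth recording why $N_{n-3,3}$ is the correct exception. It has three vertices of degree $1$; since a degree-$1$ vertex must be an endpoint of any Hamilton path while a path has only two endpoints, $N_{n-3,3}$ is not traceable. It is claw-free because each vertex of $K_{n-3}$ carries at most one pendant, so every neighbourhood has independence number at most $2$. Finally, $N_{n-3,3}$ is connected and properly contains $K_{n-3}$, and attaching pendants to a connected graph strictly increases the spectral radius, so $\mu(N_{n-3,3})>\mu(K_{n-3})=n-4$. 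Thus $N_{n-3,3}$ genuinely meets the hypothesis, and the assertion is that it is the unique such graph.

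The strategy is to trap $e(G)$ between a spectral lower bound and a structural upper bound. For the lower bound I would use the standard inequality $\mu(G)\le\sqrt{2e(G)-n+1}$, valid for connected graphs. Together with $\mu(G)\ge n-4$ this gives $2e(G)-n+1\ge(n-4)^2$, that is $e(G)\ge\tfrac12(n^2-7n+15)=\binom{n-3}{2}+\tfrac32$, and since $e(G)$ is an integer, $e(G)\ge\binom{n-3}{2}+2$. So $G$ is forced to be very dense; equivalently its complement has only about $3n$ edges.

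For the upper bound I would prove the key structural lemma: every connected claw-free non-traceable graph $G$ satisfies $e(G)\le\binom{n-3}{2}+1$, the sole exception being $G=N_{n-3,3}$, whose $\binom{n-3}{2}+3$ edges exceed this. Claw-freeness is doing the essential work, since without it the far denser non-traceable graph $K_1\vee(K_{n-3}+2K_1)$ of Theorem \ref{ThNiGe}, with $\binom{n-3}{2}+n-1$ edges, would survive; this is precisely why the spectral threshold can be lowered from $n-3$ to $n-4$ under the claw-free restriction. One clean consequence of claw-freeness that I would use repeatedly is that no vertex $v$ can meet three components of $G-v$, since one neighbour of $v$ in each component yields an induced claw; more generally every neighbourhood has independence number at most $2$. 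To extract the count I would take a longest path $P=v_1\cdots v_p$; non-traceability gives $p<n$ and a nonempty set $R=V(G)\setminus V(P)$, no vertex of which is adjacent to $v_1$ or $v_p$. Standard rotations propagate the property of having all neighbours on $P$ to many vertices, and applying claw-freeness at the path-neighbours of the endpoints and at the attachment points of $R$ forces a rigid picture: a large clique carrying only pendant-like attachments. Counting the edges missing from a complete graph then yields the bound, and any deviation from the $N_{n-3,3}$ pattern is shown to cost on the order of $n$ edges.

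Finally I would combine the two estimates. For $n$ large the structural lemma gives $e(G)\le\binom{n-3}{2}+1$ for every connected claw-free non-traceable $G\ne N_{n-3,3}$, which contradicts the spectral lower bound $e(G)\ge\binom{n-3}{2}+2$; hence $G=N_{n-3,3}$. The main obstacle is the structural lemma of the previous paragraph: converting the local claw-free constraints and the longest-path rotations into a clean global edge count, and in particular showing that the edge gap between $N_{n-3,3}$ and every other connected claw-free non-traceable graph is wide enough (at least $2$, in fact $\Theta(n)$) that the rather lossy bound $\mu(G)\le\sqrt{2e(G)-n+1}$ still separates them. A secondary but necessary chore is to dispose of the small values of $n$ directly and to confirm the exact inequality $\mu(N_{n-3,3})\ge n-4$, so that the exceptional graph really does satisfy the hypothesis. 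This mirrors, in the claw-free setting, the edge-counting-plus-spectral-bound scheme underlying Theorem \ref{ThNiGe}.
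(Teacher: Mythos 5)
Your framework is exactly the paper's: apply Hong's bound $\mu(G)\le\sqrt{2e(G)-n+1}$ to get $e(G)\ge\binom{n-3}{2}+2$, then invoke a structural lemma saying that connected claw-free non-traceable graphs with that many edges are essentially $N_{n-3,3}$. The arithmetic in the first step is correct and identical to the paper. The problem is that the second step --- which is where all of the actual work lies --- is not proved. You state the lemma, correctly identify it as ``the main obstacle,'' and offer only a sketch (longest path, rotations, claw-freeness at endpoints and attachment vertices forcing ``a large clique carrying only pendant-like attachments''). That sketch is not a proof: nothing in it explains how one gets the exact threshold $\binom{n-3}{2}+1$ with a unique exception, nor why the rigid picture emerges. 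The paper's proof of the corresponding statement (Lemma~\ref{LeEdge}) is a genuinely nontrivial argument running through Ryj\'a\v{c}ek's closure, the Duffus--Gould--Jacobson theorem that connected claw-free $N$-free graphs are traceable, and a detailed case analysis of how outside vertices can attach to an induced copy of $N$ in a closed graph. None of that machinery, or any workable substitute for it, appears in your proposal.

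A smaller but concrete error: your structural lemma as stated is false at $n=7$. The graph $L$ of Fig.~2 is connected, claw-free, non-traceable, not equal to $N_{4,3}$, and has $8=\binom{4}{2}+2$ edges, so the bound $e(G)\le\binom{n-3}{2}+1$ fails for it; the paper's Lemma~\ref{LeEdge} lists $L$ as a second exception and then disposes of it in the proof of Theorem~\ref{ThMuG} by noting $\mu(L)<3=n-4$. Your remark that small $n$ needs separate treatment points in the right direction, but you would need to actually locate this exceptional graph for the lemma to be salvageable. In summary: same skeleton as the paper, correct first half, but the core lemma is asserted rather than proven, and as asserted it is not quite true.
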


\begin{theorem}\label{ThComponent}
Let $G$ be a connected claw-free graph on $n\geq 24$ vertices. If
$\mu(\overline{G})\leq\mu(\overline{N_{n-3,3}})$, then $G$ is
traceable unless $G=N_{n-3,3}$.
\end{theorem}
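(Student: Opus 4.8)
The plan is to argue by contradiction: assume $G$ is connected, claw-free, non-traceable, $G\neq N_{n-3,3}$, and $\mu(\overline G)\le \mu(\overline{N_{n-3,3}})$, and derive a contradiction by showing $\mu(\overline G)>\mu(\overline{N_{n-3,3}})$. First I would pin down the target value. The partition of $\overline{N_{n-3,3}}$ into the three degree-$(n-2)$ vertices (a triangle), the three degree-$2$ vertices, and the remaining $n-6$ degree-$3$ vertices is equitable, with quotient matrix
\[
Q=\begin{pmatrix}2&2&n-6\\ 2&0&0\\ 3&0&0\end{pmatrix},
\]
whose characteristic polynomial is $-\lambda\big(\lambda^2-2\lambda-(3n-14)\big)$; hence $\mu(\overline{N_{n-3,3}})=1+\sqrt{3n-13}$. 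Throughout I would use Perron--Frobenius monotonicity, $\mu(\overline G)\ge\mu(H)$ for every subgraph $H\subseteq\overline G$, so it suffices to exhibit a single subgraph of $\overline G$ of spectral radius exceeding $1+\sqrt{3n-13}$.

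Second, I would extract structure from non-traceability. Since $G$ is connected and non-traceable, the Chv\'atal--Erd\H{o}s bound gives $\alpha(G)\ge\kappa(G)+2\ge 3$; fixing a minimum separator $S$, the graph $G-S$ has components $C_1,\dots,C_k$ with $k\ge 2$. The decisive use of claw-freeness is that no vertex of $S$ can have neighbours in three distinct components (such neighbours would be pairwise non-adjacent and would form a claw with that vertex), so every vertex of $S$ meets at most two components; this rigidity keeps $|S|$ small relative to $k$ and forces the components to carry almost all of $V(G)$. In $\overline G$ these features become dense subgraphs: an independent set of size $\alpha$ in $G$ is a clique $K_\alpha$ in $\overline G$, and any two distinct components $C_i,C_j$ induce a complete bipartite graph $K_{|C_i|,|C_j|}$ in $\overline G$ (and all components together induce a complete multipartite subgraph).

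Third, I would split into a \emph{spread} case and a \emph{concentrated} case. If $\alpha(G)$ is large, or if the two largest components satisfy $|C_1|\,|C_2|\ge 3n$, then $\overline G$ contains $K_\alpha$ or $K_{|C_1|,|C_2|}$ with $\mu\ge\max\{\alpha-1,\sqrt{|C_1|\,|C_2|}\}$, and for $n\ge 24$ this already exceeds $1+\sqrt{3n-13}$ by a wide margin, contradicting the hypothesis. The surviving concentrated case is the one where $G$ has small independence number and a single giant component together with a few small, pendant-like low-degree vertices, i.e. $G$ is close to a clique with a bounded number of low-degree attachments, which is exactly the configuration making $\overline{N_{n-3,3}}$ extremal. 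Here I would enumerate the possibilities precisely (using that a claw-free clique carries at most one pendant per vertex, and that three independent degree-$\le 1$ vertices already obstruct a Hamilton path) and then run a direct spectral comparison, computing $\mu(\overline G)$ from the corresponding quotient matrix and showing it strictly exceeds $1+\sqrt{3n-13}$ for every admissible graph other than $N_{n-3,3}$.

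The main obstacle is precisely this concentrated case: away from it the multipartite and clique subgraphs of $\overline G$ win comfortably, but near $N_{n-3,3}$ the two spectral radii differ only by a lower-order term, so I would need a careful perturbation/quotient-matrix analysis -- ruling out, one family at a time, every near-extremal claw-free non-traceable graph and checking the strict inequality -- and it is here that both claw-freeness (to bound the number and size of the attachments) and the hypothesis $n\ge 24$ (to make the finitely many boundary comparisons go the right way) are indispensable. A cleaner alternative I would keep in reserve is to feed the concentrated structure into Theorem \ref{ThMuG}: since such a $G$ is non-traceable and $\neq N_{n-3,3}$, that theorem yields $\mu(G)<n-4$, which limits how close to a clique $G$ can be and thereby forces enough extra edges into $\overline G$ to push $\mu(\overline G)$ above the threshold.
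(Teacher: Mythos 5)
Your overall framework---contradiction via subgraph monotonicity of $\mu(\overline G)$, together with the exact value $\mu(\overline{N_{n-3,3}})=1+\sqrt{3n-13}$ from the equitable partition (which is correct, and sharper than the bound $1+\sqrt{3n-8}$ the paper actually uses)---is sound, and so is the ``spread'' half: a large independent set of $G$, or two large components of $G-S$, give a clique or complete bipartite subgraph of $\overline G$ that pushes $\mu(\overline G)$ past the threshold. The genuine gap is that the surviving ``concentrated'' case is nowhere near as tame as you assert. Excluding only $\alpha(G)>2+\sqrt{3n-13}$ and $|C_1|\,|C_2|>\left(1+\sqrt{3n-13}\right)^2$ still leaves claw-free non-traceable graphs with independence number up to order $\sqrt{n}$ and essentially unconstrained structure inside the giant component; the claim that $G$ must then be ``a clique with a bounded number of low-degree attachments'' does not follow, and there is no finite list of families to enumerate. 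Deciding which of these residual graphs are non-traceable is precisely the hard content of the theorem, and the paper handles it with machinery for which you offer no substitute: the Ryj\'{a}\v{c}ek closure, the Ore-type degree-sum bound for closed non-traceable claw-free graphs (Lemma \ref{LeDegreesum}), Hofmeister's inequality converting that bound into $\mu(\overline{G'})\ge\sqrt{e(\overline{G'})}$, and finally the edge-count Lemma \ref{LeEdge'} (which itself rests on the block-chain and $M$-free theorems). As written, your third step is a restatement of the problem rather than a proof.

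Two smaller but concrete defects. First, $|C_1|\,|C_2|\ge 3n$ does not exceed the threshold ``by a wide margin'': at $n=24$ one has $\sqrt{3n}=\sqrt{72}\approx 8.49$ while $1+\sqrt{59}\approx 8.68$, so you in fact need $|C_1|\,|C_2|>3n-12+2\sqrt{3n-13}$, and the margin is thin exactly where the extremal graph lives. Second, the reserve plan via Theorem \ref{ThMuG} cannot work: the only general inequality in the useful direction is $\mu(G)+\mu(\overline G)\ge \frac{2}{n}\binom{n}{2}=n-1$ (from average degrees), so $\mu(G)<n-4$ yields only $\mu(\overline G)>3$, far below the required $1+\sqrt{3n-13}$.
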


\section{Preliminaries}

In this section, we first extend the concept of claw-free graphs to
a general one. Let $R$ be a given graph. The graph $G$ is called
\emph{$R$-free} if $G$ contains no induced subgraph isomorphic to
$R$. We will also use three special graphs $L$, $M$ and $N$ (see
Fig. 2). Note that $N=N_{3,3}$.

\begin{center}
\begin{picture}(340,80)
\thicklines

\put(0,20){\put(20,10){\circle*{4}} \put(20,50){\circle*{4}}
\put(60,10){\circle*{4}} \put(60,50){\circle*{4}}
\put(80,10){\circle*{4}} \put(80,50){\circle*{4}}
\put(40,30){\circle*{4}}

\put(20,10){\line(0,1){40}} \put(20,10){\line(1,1){40}}
\put(20,50){\line(1,-1){40}} \put(60,10){\line(0,1){40}}
\put(60,10){\line(1,0){20}} \put(60,50){\line(1,0){20}}

\put(36,36){$a_1$} \put(10,46){$b_1$} \put(10,10){$b'_1$}
\put(62,14){$a_2$} \put(76,14){$b_2$} \put(62,42){$a_3$}
\put(76,42){$b_3$} \put(45,-10){$L$}}

\put(100,20){\multiput(20,30)(20,0){4}{\circle*{4}}
\put(100,10){\circle*{4}} \put(100,50){\circle*{4}}
\put(120,10){\circle*{4}} \put(120,50){\circle*{4}}
\put(40,30){\circle*{4}}

\put(20,30){\line(1,0){60}} \put(80,30){\line(1,1){20}}
\put(80,30){\line(1,-1){20}} \put(100,10){\line(0,1){40}}
\put(100,10){\line(1,0){20}} \put(100,50){\line(1,0){20}}

\put(72,34){$a_1$} \put(54,34){$b_1$} \put(36,34){$c_1$}
\put(18,34){$d_1$} \put(102,14){$a_2$} \put(116,14){$b_2$}
\put(102,42){$a_3$} \put(116,42){$b_3$} \put(65,-10){$M$}}

\put(240,20){\put(20,30){\circle*{4}} \put(60,10){\circle*{4}}
\put(60,50){\circle*{4}} \put(80,10){\circle*{4}}
\put(80,50){\circle*{4}} \put(40,30){\circle*{4}}

\put(20,30){\line(1,0){20}} \put(40,30){\line(1,1){20}}
\put(40,30){\line(1,-1){20}} \put(60,10){\line(0,1){40}}
\put(60,10){\line(1,0){20}} \put(60,50){\line(1,0){20}}

\put(32,34){$a_1$} \put(18,34){$b_1$} \put(62,14){$a_2$}
\put(76,14){$b_2$} \put(62,42){$a_3$} \put(76,42){$b_3$}
\put(45,-10){$N$}}

\end{picture}

\small Fig. 2. Graphs $L$, $M$ and $N$.
\end{center}

The following two theorems concerning traceability of claw-free
graphs are used in our proofs.

\begin{theorem}[Duffus, Gould and Jacobson \cite{DGJ}]\label{LeDuGoJa}
Every connected claw-free and $N$-free graph is traceable.
\end{theorem}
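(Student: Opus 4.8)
The plan is to argue by contradiction using a longest path, exploiting $N$-freeness (here $N$ is the \emph{net}, a triangle with a pendant edge at each vertex) to force a contradiction. Suppose $G$ is connected, claw-free and $N$-free but not traceable, and let $P=v_1v_2\cdots v_k$ be a longest path in $G$; since $G$ is not traceable, $k<n$. As $P$ is longest, neither endpoint has a neighbour off $P$, so $N(v_1),N(v_k)\subseteq V(P)$. Because $G$ is connected and $V(P)\ne V(G)$, some vertex $u\notin V(P)$ has a neighbour on $P$; fix such a $u$ and let $v_i$ be a neighbour of $u$, necessarily with $2\le i\le k-1$ (an endpoint neighbour would extend $P$). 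The goal will be to locate an induced copy of $N$ around $v_i$, contradicting $N$-freeness.

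First I would record two local facts. \emph{Insertion:} $u$ cannot be adjacent to two consecutive vertices $v_j,v_{j+1}$ of $P$, for then $v_1\cdots v_j\,u\,v_{j+1}\cdots v_k$ would be a path on $k+1$ vertices, contradicting maximality; in particular $uv_{i-1},uv_{i+1}\notin E(G)$. \emph{Triangle:} the three vertices $v_{i-1},v_{i+1},u$ all lie in $N(v_i)$, so since $G$ is claw-free they cannot be pairwise non-adjacent, and as $uv_{i-1},uv_{i+1}\notin E(G)$ we must have $v_{i-1}v_{i+1}\in E(G)$. Together with the path edges this makes $\{v_{i-1},v_i,v_{i+1}\}$ a triangle carrying the pendant $u$ at $v_i$ (a pendant because $uv_{i-1},uv_{i+1}\notin E(G)$).

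It remains to produce pendants at the other two triangle vertices. The natural candidates are $v_{i-2}$ at $v_{i-1}$ and $v_{i+2}$ at $v_{i+1}$ (assume for now $3\le i\le k-2$, the near-endpoint cases $i\in\{2,k-1\}$ being treated separately, where the endpoint facts $N(v_1),N(v_k)\subseteq V(P)$ and a short rotation argument take over). If the seven non-adjacencies required for $\{v_{i-2},v_{i-1},v_i,v_{i+1},v_{i+2},u\}$ to induce $N$ — namely that $v_{i-2}$ misses each of $v_i,v_{i+1},u,v_{i+2}$ and that $v_{i+2}$ misses each of $v_i,v_{i-1},u$ — all hold, then we have an induced net and are done. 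Otherwise at least one of these forbidden chords is present, and in each such case I would use that chord, together with the triangle edge $v_{i-1}v_{i+1}$ and the pendant $u$, to reroute $P$ into a path on $k+1$ vertices (inserting $u$ after a suitable rotation), once more contradicting maximality.

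The main obstacle is precisely this last case analysis: showing that every configuration in which one of the forbidden chords appears can be converted into a strictly longer path. Each individual case is an elementary rotation/insertion, but organizing them uniformly — and handling the boundary situations together with the possibility that $u$ has several neighbours on $P$, whose spacing is already constrained by the insertion fact — is where the real care is needed. A convenient device to streamline this is to choose $u$ and its neighbour $v_i$ extremally, for instance with the index $i$ as small as possible, so that low-index chords are excluded a priori and fewer cases survive.
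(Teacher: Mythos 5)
The paper does not actually prove this statement: it is quoted as a known theorem of Duffus, Gould and Jacobson and used as a black box, so there is no in-paper proof to compare against; your attempt has to stand on its own, and it has a genuine gap. The setup is fine: the longest path $P=v_1\cdots v_k$, the attachment vertex $u$ with $uv_i\in E(G)$, the insertion fact $uv_{i-1},uv_{i+1}\notin E(G)$, and the forced triangle $v_{i-1}v_iv_{i+1}$ with pendant $u$ at $v_i$ are all correct. The gap is the final step, which you explicitly defer: the claim that \emph{every} one of the seven forbidden chords can be converted into a path on $k+1$ vertices by a rotation/insertion is false as stated. The chords incident with $u$ do work (e.g.\ $uv_{i-2}\in E(G)$ gives $v_1\cdots v_{i-2}\,u\,v_i\,v_{i-1}\,v_{i+1}\,v_{i+2}\cdots v_k$), but a chord such as $v_{i-2}v_i$, $v_{i-2}v_{i+1}$ or $v_{i-2}v_{i+2}$ does not let you insert $u$: in the worst case $u$ has the single neighbour $v_i$ in the whole configuration, so any path on the $k+1$ vertices $V(P)\cup\{u\}$ must have $u$ as an endpoint and must then be a Hamilton path of $G[V(P)]$ starting at $v_i$; with only these chords available, every such route strands either the prefix $v_1\cdots v_{i-3}$ or the suffix $v_{i+3}\cdots v_k$.

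In those surviving cases one must instead iterate (claw-freeness at $v_i$ or at $v_{i-2}$ forces further chords, producing new triangles and new pendant candidates), or hunt for an induced net whose pendant legs are not $v_{i\pm 2}$, and that branching case analysis --- together with the boundary cases $i\in\{2,k-1\}$, which you also defer --- is precisely the substance of the Duffus--Gould--Jacobson proof. Choosing $i$ minimal does not dispose of these chords, since the minimal attachment index can still be far from both ends of $P$. So what you have is a correct opening move and an accurate diagnosis of where the difficulty lies, but the difficulty itself is not resolved, and the one uniform mechanism you propose for resolving it (``each forbidden chord yields a longer path'') breaks on several of the seven chords.
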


Adopting the terminology of \cite{F}, we say that a graph is a
\emph{block-chain} if it is nonseparable or it has connectivity 1
and has exactly two end-blocks.

\begin{theorem}[Li, Broersma and Zhang \cite{LBZ}]\label{LeLiBrZh}
Let $G$ be a block-chain. If $G$ is claw-free and $M$-free, then $G$
is traceable.
\end{theorem}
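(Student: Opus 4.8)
The plan is to argue by contradiction, combining the standard analysis of a longest path in a claw-free graph with the forbidden subgraph $M$ and the block-chain hypothesis. Suppose $G$ is a block-chain that is claw-free and $M$-free but not traceable, and let $P=v_0v_1\cdots v_k$ be a longest path in $G$. Since $G$ is not traceable, $V(P)\neq V(G)$, and since $G$ is connected there is a vertex $u\notin V(P)$ adjacent to some $v_i$ on $P$. Maximality of $P$ forces $1\le i\le k-1$, because a neighbour of an endpoint would extend $P$; more importantly, $u$ cannot be adjacent to $v_{i-1}$ or $v_{i+1}$, since otherwise $u$ could be inserted between $v_{i-1}$ and $v_i$ (or between $v_i$ and $v_{i+1}$) to yield a path of length $k+1$. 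Applying claw-freeness to $v_i$ with the three neighbours $u,v_{i-1},v_{i+1}$ then forces the edge $v_{i-1}v_{i+1}$, so that $\{v_{i-1},v_i,v_{i+1}\}$ spans a triangle with $u$ a pendant attached at $v_i$. This triangle-with-pendant is the core of a net, and it is the starting point for locating an induced $M$.

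Next I would grow this core into an induced copy of $M$ using the two tails of $P$. Set $a_1=v_{i-1}$, $a_2=v_i$, $a_3=v_{i+1}$ and $b_2=u$. The path continues past $v_{i+1}$, so $v_{i+2}$ can play the role of the pendant $b_3$ at $a_3$, while the segment $v_{i-2}v_{i-3}v_{i-4}$ of $P$ supplies the length-three tail $b_1c_1d_1$ hanging at $a_1$. The substance of this step is to check, vertex by vertex, that these eight vertices induce \emph{exactly} $M$: every potential chord—such as $v_{i-1}v_{i-3}$, $v_iv_{i+2}$, $v_{i-1}v_{i+2}$, or an edge from $u$ to a tail vertex—either contradicts the maximality of $P$ (by shortcutting or inserting so as to create a longer path) or produces a claw at one of the triangle vertices. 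Carrying this out shows that $G$ contains an induced $M$, contradicting the hypothesis, provided the attachment index $i$ is far enough from both ends of $P$ (roughly $4\le i\le k-2$, or the mirror-image range obtained by symmetry).

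The genuine difficulty, and the place where the block-chain hypothesis is indispensable, is the boundary situation in which \emph{every} vertex outside $P$ is forced to attach near an endpoint, so that the tail needed for $M$ is too short on both sides. Note that one cannot sidestep this by appealing to Theorem \ref{LeDuGoJa}, since being $M$-free does not imply being net-free; the length-three tail of $M$ is essential and must actually be produced. To handle this I would use path rotations to replace $P$ by other longest paths with different endpoints, thereby relocating the available attachments toward the interior, and then invoke the structure of a block-chain: because $G$ has connectivity at most one with exactly two end-blocks, its block-cut tree is a path, so there is no ``third branch'' in which an off-path vertex can hide. This path-like global structure, together with the $2$-connectivity of each block and claw-freeness, should force either an interior attachment (returning to the previous paragraph) or an outright extension of $P$, contradicting maximality. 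Making the interplay between the rotations, the end-block structure, and the exclusion of $M$ fully rigorous is the main obstacle; the earlier steps are essentially the familiar claw-free longest-path calculus.
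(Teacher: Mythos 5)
This statement is quoted from Li, Broersma and Zhang \cite{LBZ}; the paper contains no proof of it, so the only question is whether your sketch would stand on its own. It would not. The opening move is fine and standard: for a longest path $P=v_0\cdots v_k$ and an off-path vertex $u$ adjacent to $v_i$, maximality and claw-freeness do force $uv_{i-1},uv_{i+1}\notin E(G)$ and $v_{i-1}v_{i+1}\in E(G)$. But the step where you ``check, vertex by vertex'' that $\{v_{i-4},v_{i-3},v_{i-2},v_{i-1},v_i,v_{i+1},u,v_{i+2}\}$ induces $M$ is where the argument breaks. A chord such as $v_{i-1}v_{i-3}$ or $v_{i-2}v_{i-4}$ neither yields a longer path nor creates a claw (the relevant neighbours pair up into adjacent couples along $P$), so nothing forbids it, and with such a chord present your eight vertices do not induce $M$. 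Worse, the entire initial segment $v_0,\dots,v_{i-1}$ could be a clique, in which case no induced path of length three hangs off $v_{i-1}$ on that side at all; the same can happen on the other side. So the dichotomy ``every potential chord contradicts maximality or produces a claw'' is simply false, and the existence of the length-three induced tail --- which, as you correctly note, is exactly what separates $M$-freeness from the $N$-freeness of Theorem~\ref{LeDuGoJa} --- is never actually established.

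The second half of your sketch concedes the remaining difficulties (relocating attachments by rotations, exploiting the two-end-block structure) without resolving them, and that is precisely where the substance of the Li--Broersma--Zhang theorem lies: their result is part of a characterization of forbidden pairs for traceability of block-chains, and the block-chain hypothesis enters in an essential, not merely bookkeeping, way. As it stands your text is a plausible plan with the two central steps --- producing the induced $M$ and handling the boundary configurations --- missing, so it cannot be accepted as a proof.
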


One important tool for studying Hamiltonian properties of claw-free
graphs is the closure theory introduced by Ryj\'{a}\v{c}ek \cite{R}.
It is also useful for our proof. To ensure the completeness of our
text, we include all the terminology and notations as follows. For
other more information, see \cite{R}.

Let $G$ be a graph. Following \cite{R}, for a vertex $x\in V(G)$, if
the neighborhood of $x$ induces a connected but non-complete
subgraph of $G$, then we say that $x$ is \emph{eligible} in $G$. Set
$B_G(x)=\{uv: u,v\in N(x),uv\notin E(G)\}$. The graph $G'_x$,
constructed by $V(G'_x)=V(G)$ and $E(G'_x)=E(G)\cup B_G(x)$, is
called the \emph{local completion of $G$ at $x$}.

As shown in \cite{R}, the \emph{closure} of a claw-free graph $G$,
denoted by $cl(G)$, is defined by a sequence of graphs
$G_1,G_2,\ldots,G_t$, and
vertices $x_1,x_2\ldots,x_{t-1}$ such that \\
(1) $G_1=G$, $G_t=cl(G)$; \\
(2) $x_i$ is an eligible vertex of $G_i$, $G_{i+1}=(G_i)'_{x_i}$,
$1\leq i\leq t-1$; and\\
(3) $cl(G)$ has no eligible vertices.

\begin{theorem}[Ryj\'{a}\v{c}ek \cite{R}]\label{LeRy}
Let $G$ be a claw-free graph. Then $cl(G)$ is also claw-free.
\end{theorem}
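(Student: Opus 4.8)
The plan is to reduce Theorem~\ref{LeRy} to the one-step statement that a single local completion preserves claw-freeness, and then to induct along the defining sequence $G=G_1,G_2,\ldots,G_t=cl(G)$. The basic structural observation I would record first is that, for $H:=G'_x$, we have $E(H)=E(G)\cup B_G(x)$, where every edge of $B_G(x)$ joins two vertices of $N_G(x)$. Consequently $N_G(x)$ induces a \emph{clique} in $H$, the neighborhood itself is unchanged ($N_H(x)=N_G(x)$ since $x\notin N_G(x)$), and every non-edge of $H$ is also a non-edge of $G$. These three facts are all that the argument needs; in particular, eligibility of $x$ plays no essential role for claw-freeness, so the one-step lemma holds for local completion at any vertex.

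Next I would prove the one-step lemma by contradiction: assume $G$ is claw-free but $H$ contains an induced claw $\{v;a,b,c\}$ with center $v$ and pairwise non-adjacent leaves $a,b,c$. Since the leaves are non-adjacent in $H$, they are non-adjacent in $G$ as well. If all three edges $va,vb,vc$ already lie in $E(G)$, then $\{v;a,b,c\}$ is an induced claw in $G$, a contradiction; hence at least one claw-edge is new, say $va\in B_G(x)$, which forces $v,a\in N_G(x)$. Because $a\in N_G(x)$ and $N_G(x)$ is a clique in $H$, while $a$ is non-adjacent in $H$ to $b$ and to $c$, we conclude $b,c\notin N_G(x)$. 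Then $vb,vc$ are not new edges, so $vb,vc\in E(G)$, and $vx\in E(G)$ because $v\in N_G(x)$. Moreover $x\neq b$ and $x\neq c$ (if, say, $x=b$, then the non-adjacency of $a$ and $b$ would read $a\notin N_G(x)$, contradicting $a\in N_G(x)$), and $xb,xc\notin E(G)$ because $b,c\notin N_G(x)$. Together with $bc\notin E(G)$, this exhibits $\{v;x,b,c\}$ as an induced claw in $G$, the desired contradiction. Hence $H$ is claw-free.

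Finally I would assemble the induction: $G_1=G$ is claw-free by hypothesis, and if $G_i$ is claw-free then $G_{i+1}=(G_i)'_{x_i}$ is a local completion of a claw-free graph and is therefore claw-free by the one-step lemma; thus $cl(G)=G_t$ is claw-free. I expect the only real obstacle to be the case in which a claw-edge of $H$ is genuinely new, i.e.\ incident with the center $v\in N_G(x)$. The resolution is to substitute the honest edge $vx$ for the new edge and to use that the two leaves lying outside $N_G(x)$ remain non-adjacent to $x$; the single piece of bookkeeping that must be checked carefully is that $x$ is distinct from those two leaves, which follows from $a\in N_G(x)$ together with the non-adjacencies inside the claw.
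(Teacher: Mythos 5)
The paper does not prove this statement at all --- it is imported verbatim from Ryj\'a\v{c}ek's paper \cite{R} --- so there is no internal proof to compare against. Your argument is correct and is essentially the standard one: induct along the closure sequence, reducing everything to the one-step lemma that a single local completion preserves claw-freeness, which you verify by replacing a new claw-edge $va$ (with $v,a\in N_G(x)$) by the genuine edge $vx$ and checking that $x,b,c$ remain pairwise non-adjacent and distinct. Your observation that eligibility of $x$ is irrelevant here is also accurate: local connectedness is needed only for the stability of Hamiltonian-type properties, not for preserving claw-freeness.
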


\begin{theorem}[Brandt, Favaron and Ryj\'{a}\v{c}ek \cite{BFR}] \label{LeBrFaRy}
Let $G$ be a claw-free graph. Then $G$ is traceable if and only if
$cl(G)$ is traceable.
\end{theorem}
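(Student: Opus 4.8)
The plan is to reduce the statement to a single \emph{local completion} step and to show that such a step preserves traceability. By the definition of $cl(G)$ as a sequence $G=G_1,G_2,\ldots,G_t=cl(G)$ with $G_{i+1}=(G_i)'_{x_i}$ for an eligible vertex $x_i$, it suffices to prove: if $G$ is claw-free and $x$ is eligible in $G$, then $G$ is traceable if and only if $G':=G'_x$ is traceable; iterating this equivalence along the sequence then yields the theorem. One direction is immediate, since $E(G)\subseteq E(G')$, so every Hamilton path of $G$ is a Hamilton path of $G'$ and traceability of $G$ implies that of $G'$.

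For the nontrivial direction, I would suppose $G'$ is traceable and set $B:=B_G(x)=E(G')\setminus E(G)$; every edge of $B$ joins two vertices of $N(x)$, and in $G'$ the set $\{x\}\cup N(x)$ is a clique (local completion at $x$ adds no edge incident with $x$, so $N_{G'}(x)=N_G(x)$). Among all Hamilton paths of $G'$, I would choose $P$ minimizing $|E(P)\cap B|$ and call these the \emph{bad} edges. If $P$ has no bad edge then $P\subseteq G$ and we are done, so assume $P$ has a bad edge $uv$ with $u,v\in N(x)$ and $uv\notin E(G)$. The key observation is that claw-freeness applied at $x$ forces useful adjacencies: if $w\in N(x)$ is a neighbour of $x$ along $P$ (so $xw\in E(G)$), then $\{x,u,v,w\}$ cannot induce a claw centred at $x$, and since $uv\notin E(G)$ we must have $wu\in E(G)$ or $wv\in E(G)$.

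Using this, I would perform a rotation that strictly decreases the number of bad edges, contradicting the minimality of $P$. Suppose first that $x$ is internal in $P$, with path-neighbours $x^-$ and $x^+$ in $N(x)$. The observation gives that $x^+$ is $G$-adjacent to $u$ or to $v$, and likewise $x^-$; a short case analysis (symmetric under reversing $P$ and under swapping $u,v$) then shows that one may delete $uv$ together with one of the $G$-edges $xx^-,xx^+$ and re-insert two edges from $\{xu,xv,x^-u,x^-v,x^+u,x^+v\}$ that lie in $G$, recovering a Hamilton path of $G'$ with one fewer bad edge. For instance, if $x^+v\in E(G)$ and $P=\cdots x^-\,x\,x^+\cdots u\,v\cdots$, then deleting $xx^+$ and $uv$ and adding $xu$ and $x^+v$ reconnects the three resulting subpaths into the Hamilton path $\cdots x^-\,x\,u\cdots x^+\,v\cdots$, whose only new edges $xu$ and $x^+v$ both lie in $G$ while the deleted bad edge $uv$ is gone. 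In each case the bad-edge count drops, so by minimality $P$ has none and $G$ is traceable.

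The main obstacle is the completeness of this case analysis, and in particular the endpoint configurations that have no analogue in the Hamilton-cycle closure theorem of Ryj\'{a}\v{c}ek \cite{R}: when $x$ is an endpoint of $P$ it has only one path-neighbour, and when the bad edge $uv$ is incident with an endpoint the re-insertion must be chosen to keep the object a path rather than produce a cycle or a stranded endpoint. One must verify that the forced adjacency supplied by claw-freeness, together with the freedom to reattach a loose subpath at the free end, still yields a decreasing rotation in every such configuration. I would also note that the tempting shortcut of reducing traceability of $G$ to Hamiltonicity of $G\vee K_1$ (to invoke the Hamiltonian closure theorem directly) fails here, since $G\vee K_1$ is claw-free only when the independence number of $G$ is at most $2$; this is exactly why the direct path-rerouting argument is needed.
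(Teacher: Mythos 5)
The paper offers no proof of this statement: it is imported verbatim from Brandt, Favaron and Ryj\'a\v{c}ek \cite{BFR}, so your attempt can only be measured against the known arguments there and in Ryj\'a\v{c}ek's original paper \cite{R}. Your overall strategy --- reduce to a single local completion step, choose a Hamilton path of $G'_x$ minimizing the number of edges of $B_G(x)$ it uses, and rotate to decrease that number --- is indeed the strategy of those proofs, and your remarks that one direction is trivial and that the reduction to Hamiltonicity of $G\vee K_1$ fails for claw-freeness reasons are both correct.

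There is, however, a genuine gap, and it is not merely the unfinished case analysis you yourself flag: your argument never uses the hypothesis that $x$ is \emph{eligible}, i.e.\ that $G[N_G(x)]$ is connected, and without that hypothesis the single-step claim is false. Take $G=N_{3,3}$ (the net $N$ of Fig.~2) and $x=a_1$: then $G$ is claw-free and not traceable (it has three vertices of degree one), $N_G(a_1)=\{a_2,a_3,b_1\}$ induces $K_2+K_1$, and yet $G'_{a_1}$ has the Hamilton path $b_2a_2a_1b_1a_3b_3$. All the adjacencies your claw-freeness-at-$x$ observation supplies are present in this example (for the unique bad edge $uv=b_1a_3$ one gets $x^-v=a_2a_3\in E(G)$), but no rotation can succeed because $G$ has no Hamilton path at all. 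Hence the forced adjacencies coming from claws centred at $x$ --- which is all your rotations exploit --- cannot suffice. In the actual proofs eligibility enters essentially: one takes a shortest $u$--$v$ path inside $G[N_G(x)]$, which exists precisely because $x$ is eligible, and inducts on its length, applying claw-freeness at the internal vertices of that path and not only at $x$. Until that ingredient is built into your scheme, the ``short case analysis'' you postpone cannot be completed, and the endpoint configurations you worry about are a secondary issue by comparison.
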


A claw-free graph $G$ is said to be \emph{closed} if $cl(G)=G$. It
is not difficult to see that for every vertex $x$ of a closed graph
$G$, $N_G(x)$ is either a clique, or the disjoint union of two
cliques in $G$ (see \cite{R}). In the following, we say a vertex $x$
of a graph $G$ is a \emph{bad vertex} of $G$ if $N_G(x)$ is neither
a clique, nor the disjoint union of two cliques. So every closed
graph has no bad vertices.

\begin{lemma}\label{LeDegreesum}
Let $G$ be a closed claw-free graph. If there are two nonadjacent
vertices of $G$ have degree sum at least $n-1$, then $G$ is
traceable.
\end{lemma}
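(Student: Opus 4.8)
The plan is to analyze the local structure around the prescribed nonadjacent pair and then assemble a Hamilton path by hand. Throughout I assume $G$ is connected; this is the setting in which the lemma is applied and it is clearly necessary, since the disjoint union of a $4$-cycle with an isolated vertex is closed and claw-free and has a nonadjacent pair of degree sum $n-1$, yet is not traceable. Let $u,v$ be nonadjacent with $d(u)+d(v)\ge n-1$, and partition $V(G)\setminus\{u,v\}$ into $A=N(u)\setminus N(v)$, $B=N(v)\setminus N(u)$, $C=N(u)\cap N(v)$, and the set $R$ of vertices adjacent to neither $u$ nor $v$. A direct count gives $|A|+|B|+2|C|=d(u)+d(v)\ge n-1$ while $|A|+|B|+|C|+|R|=n-2$, so subtracting yields $|C|\ge |R|+1\ge 1$; in particular $u$ and $v$ have a common neighbor.

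The first real step, and the crux of the whole argument, is to bound $|C|$. Fix $c\in C$. Since $u,v\in N(c)$ are nonadjacent, $N(c)$ is not a clique, so by the neighborhood structure of a closed graph recorded just above the lemma, $N(c)=Q_1\sqcup Q_2$ is a disjoint union of two cliques with no edges between them, and $u,v$ lie in different parts, say $u\in Q_1$ and $v\in Q_2$. Now let $c'\in C$ be any other common neighbor: it is adjacent to both $u$ and $v$, so if $c'\in N(c)$ it would have to lie in $Q_1$ (being adjacent to $u$) and simultaneously in $Q_2$ (being adjacent to $v$), which is impossible. Hence $c\not\sim c'$, i.e. $C$ is an independent set. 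Since $C\subseteq N(u)$ and $G$ is claw-free, $N(u)$ cannot contain three pairwise nonadjacent vertices, so $|C|\le 2$, and consequently $|R|\le |C|-1\le 1$.

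It then remains to build a Hamilton path in the three surviving configurations $(|C|,|R|)\in\{(1,0),(2,0),(2,1)\}$, using that each of $N(u)=A\cup C$ and $N(v)=B\cup C$ is, by closedness, either a clique or a disjoint union of two cliques. When $|C|=2$ the two common neighbors are nonadjacent, which forces both $N(u)$ and $N(v)$ to split into two cliques with $c_1,c_2$ separated; traversing $u$ with its two cliques on either side and crossing to the $v$-side through $c_1$ and $c_2$ produces a Hamilton cycle of $G-R$, and if the single possible vertex $r\in R$ is present I delete one cycle-edge at a neighbor of $r$ to free an endpoint and append $r$. When $|C|=1$ we have $R=\emptyset$, and a path that runs along the second clique of $N(u)$, through $u$, along the first clique of $N(u)$, through the common neighbor $c$, and then along the $v$-side does the job. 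The only genuine bookkeeping, and the one place to be careful, is the correct placement of $u$ and $v$ as endpoints versus interior vertices according to whether each of their neighborhoods is a single clique or two cliques, together with the routing of the at most one vertex of $R$ as a pendant endpoint; since every remaining piece is a clique, it may be traversed in any order.
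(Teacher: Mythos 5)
Your proof is correct and follows essentially the same route as the paper's: both arguments use closedness plus claw-freeness to show that $u$ and $v$ have at most two (pairwise nonadjacent) common neighbors and at most one common non-neighbor, and then assemble a Hamilton path or cycle from the clique decomposition of $N(u)$ and $N(v)$. Your remark that connectedness must be assumed is fair --- the paper's proof also invokes it even though the lemma's statement omits it.
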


\begin{proof}
Let $x,y$ be two nonadjacent vertices of $G$ with degree sum at
least $n-1$. Note that a vertex is nonadjacent to itself. Hence
$x,y$ have at least one common neighbor.

Firstly we assume that $x,y$ have at least three common neighbors,
say $z,z',z''$. Since $G$ is claw-free, either $zz'$ or $zz''$ or
$z'z''$ is in $E(G)$. Without loss of generality, we assume that
$zz'\in E(G)$. Then $z$ is a bad vertex, a contradiction.

Secondly we assume that $x,y$ have two common neighbors, say $z,z'$.
If $zz'\in E(G)$, then $z$ will be a bad vertex. So we have that
$zz'\notin E(G)$. Let $C_x,C'_x,C_y,C'_y$ be the maximal cliques of
$G$ containing $\{x,z\},\{x,z'\},\{y,z\},\{y,z'\}$, respectively.
Clearly $H=G[C_x\cup C'_x\cup C_y\cup C'_y]$ has a Hamilton cycle.
Note that there is at most one vertex in $V(G)\backslash V(H)$.
Since $G$ is connected, we have that $G$ is traceable.

Finally we assume that $x,y$ have only one common neighbor $z$. Then
every vertex is adjacent either to $x$ or to $y$. This implies that
$G$ consists of at most four maximal cliques and $G$ is a
block-chain. Clearly in this case $G$ is traceable.
\end{proof}

The following two lemmas are crucial in the proofs of our two
theorems. We guess that they are of interest in their own rights.

\begin{lemma}\label{LeEdge}
Let $G$ be a connected claw-free graph on $n$ vertices and $m$
edges. If
\begin{align*}
m\geq \binom{n-3}{2}+2,
\end{align*}
then $G$ is traceable unless $G=N_{n-3,3}$ or $L$.
\end{lemma}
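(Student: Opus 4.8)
The plan is to suppose that $G$ is \emph{not} traceable and to force $G\in\{N_{n-3,3},L\}$. Throughout I would read the hypothesis through the complement, rewriting it as $e(\overline{G})\le 3n-8$, since $\binom{n}{2}-\binom{n-3}{2}-2=3n-8$. The natural starting point is that a non-traceable connected claw-free graph is never $N$-free, so by Theorem~\ref{LeDuGoJa} $G$ contains an induced $N=N_{3,3}$; write its triangle as $a_1a_2a_3$ and its three pendants as $b_1,b_2,b_3$. Being pairwise non-adjacent in $G$, the pendants form a triangle in $\overline{G}$, and the claw-free condition read in the complement says precisely that every triangle of $\overline{G}$ is dominating. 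Applied to $\{b_1,b_2,b_3\}$, this shows every other vertex is non-adjacent in $G$ to at least one $b_i$, so these three vertices are the obstruction that must play the role of the three pendants of $N_{n-3,3}$. (If convenient, one may first replace $G$ by its closure: by Theorems~\ref{LeRy} and~\ref{LeBrFaRy} this preserves claw-freeness and non-traceability, both $N_{n-3,3}$ and $L$ are already closed, and Lemma~\ref{LeDegreesum} then adds that any two non-adjacent vertices have degree sum at most $n-2$.)

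Next I would locate the dense core. Put $D=V(G)\setminus\{b_1,b_2,b_3\}$, so that $|D|=n-3$, and aim to show $G[D]$ is $K_{n-3}$, or misses only a bounded number of edges. The mechanism is that non-traceability keeps the degrees of $b_1,b_2,b_3$ small: if the $b_i$ had many neighbours they could be absorbed into a spanning path, so three independent ``ends'' survive only when each hangs off the core by as few edges as possible. Once the $b_i$ have small degree they are adjacent in $\overline{G}$ to almost all of $D$, so together they already account for about $3n$ of the at most $3n-8$ edges of $\overline{G}$; whatever remains must lie inside $D$, and there is almost none of it. Carrying this out quantitatively---bounding the complement-degrees of $b_1,b_2,b_3$ and the number of non-edges inside $D$ simultaneously, and using claw-freeness to restrict how any surviving non-edges of $G[D]$ can be distributed---is the technical heart of the argument.

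With $G[D]$ pinned down as (nearly) complete, the endgame is a short structural analysis. If $G[D]=K_{n-3}$ then each $b_i$ attaches to the clique, no vertex of $D$ is adjacent to all three $b_i$ (else a claw), and non-traceability forces all three $b_i$ to be pendants; two pendants cannot share a foot without creating a claw, so the feet are distinct and $G=N_{n-3,3}$, which has $\binom{n-3}{2}+3$ edges. The only alternative is that the core is a clique minus a little, which the edge count permits only with essentially no slack; tracing it through produces, at $n=7$, the sporadic graph $L$, in which one pendant edge of $N_{3,3}$ is replaced by the pendant triangle $a_1b_1b_1'$. To organise the two pieces cleanly I would split on whether $G$ is a block-chain. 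Both $N_{n-3,3}$ and $L$ have at least three end-blocks, hence are not block-chains; conversely a block-chain satisfying the edge bound is traceable, which one certifies through Theorem~\ref{LeLiBrZh} by ruling out the induced $M$ that a non-traceable claw-free block-chain would have to contain. Thus the exceptions live exactly in the ``at least three end-blocks'' case, and there the three end-blocks are the three appendages found above.

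I expect the main obstacle to be the tightness of the bound. The extremal graph $N_{n-3,3}$ has $\binom{n-3}{2}+3$ edges while the hypothesis only requires $\binom{n-3}{2}+2$, and $L$ attains the bound with equality at $n=7$; so the whole count has to be accurate to within one or two edges, leaving no room for slack when excluding every competing near-extremal configuration (a larger end-block, a fourth appendage, a core that is not a clique) and when disposing of the finitely many small values of $n$ for which the estimates above are too crude and for which the sporadic graph $L$ appears.
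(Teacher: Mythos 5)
Your outline has a genuine gap at exactly the point you label ``the technical heart.'' Everything hinges on showing that the pendant vertices $b_1,b_2,b_3$ of the induced $N$ have small degree in $G$, and the only mechanism you offer is ``if the $b_i$ had many neighbours they could be absorbed into a spanning path.'' That is not an argument: non-traceability of $G$ tells you nothing about the degree of an individual vertex of an arbitrary induced $N$, and without such a bound the dominating-triangle property of $\{b_1,b_2,b_3\}$ in $\overline{G}$ only yields $\sum_i d_{\overline{G}}(b_i)\ge n+3$ (one complement-edge per outside vertex, plus the triangle), nowhere near the ``about $3n$'' you need against the budget $e(\overline{G})\le 3n-8$. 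The same deferral recurs in the endgame (``tracing it through produces, at $n=7$, the sporadic graph $L$''), so both extremal configurations are asserted rather than derived. The block-chain dichotomy also does not do the work you assign to it: to invoke Theorem~\ref{LeLiBrZh} you must rule out an induced $M$, and the edge hypothesis alone does not do that for $G$ itself; the relevant counting is only valid after passing to the closure.

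That is the device your plan is missing and the one the paper's proof is built on. After replacing $G$ by $G'=cl(G)$ (Theorems~\ref{LeRy} and~\ref{LeBrFaRy}), every vertex of $G'$ has neighbourhood equal to a clique or a disjoint union of two cliques; since no four vertices of $N$ form such a set, every vertex outside an induced copy $H\sim N$ has at most three neighbours in $H$, whence $e(G')\le 6+\binom{n-6}{2}+3(n-6)=\binom{n-3}{2}+3$. This leaves only two possible values of $e(G')$, and a short analysis of which $3$-subsets of $V(H)$ can occur as $N_H(x)$ forces either a Hamilton path of $G'$, or $G'=N_{n-3,3}$, or --- after deleting one deficient vertex and reapplying the first case --- the sporadic graph $L$ at $n=7$. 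Your complement reformulation and your description of the final structure of $N_{n-3,3}$ are consistent with this, but as written the proposal does not constitute a proof.
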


\begin{proof}
Let $G'=cl(G)$ be the closure of $G$. Then
\begin{align*}
e(G')\geq m\geq \binom{n-3}{2}+2.
\end{align*}
If $G'$ is $N$-free, then by Theorems \ref{LeDuGoJa} and \ref{LeRy},
$G'$ is traceable, and so is $G$ by Theorem \ref{LeBrFaRy}. Now we
assume that $G'$ contains an induced subgraph $H\sim N$. We denote
the vertices of $H$ as in Fig. 2. In the following part of this
proof, we set $N_H(x)=N_{G'}(x)\cap V(H)$ and $d_H(x)=|N_H(x)|$.

For any $x\in V(G-H)$, note that the neighborhood of $x$ in $G'$ is
either a clique or the disjoint union of two cliques. But any at
least four vertices of $H$ do not form a clique or a disjoint union
of two cliques. This implies that $d_H(x)\leq 3$ for any $x\in
V(G-H)$. Thus
\begin{align*}
e(G')=e(H)+e(G'-H)+e_{G'}(H,G'-H)\leq
6+\binom{n-6}{2}+3(n-6)=\binom{n-3}{2}+3.
\end{align*}
Recall that $e(G')\geq\binom{n-3}{2}+2$. Thus we have
$e(G')=\binom{n-3}{2}+2$ or $\binom{n-3}{2}+3$.

\begin{case}
  $e(G')=\binom{n-3}{2}+3$.
\end{case}

In this case, $G'-H$ is complete and every vertex in $G'-H$ has
exactly three neighbors in $H$. Suppose first that there is a vertex
$x$ in $G'-H$ such that $N_H(x)=\{a_1,a_2,a_3\}$. We claim for every
vertex $x'$ in $G'-H$, $N_H(x')=\{a_1,a_2,a_3\}$. Since
$N_H(x')\neq\{b_1,b_2,b_3\}$, we assume without loss of generality
that $a_1\in N_H(x')$. Note that $xx'\in E(G)$ and $G'[N_{G'}(x)]$
is a clique or disjoint union of two cliques. We can see that
$a_2,a_3\in N_H(x')$. Hence as we claimed $N_H(x')=\{a_1,a_2,a_3\}$.
Thus $G'=N_{n-3,3}$.

Suppose that $E(G')\backslash E(G)\neq\emptyset$. Then
$e(G)=\binom{n-3}{2}+2$ and there is only one edge $e$ in
$E(G')\backslash E(G)$. If $e$ is a pendant edge, then $G$ is
disconnected, a contradiction. So we assume that $e=uv$ is not a
pendant edge. Suppose without loss of generality that $a_1$ is a
vertex in $\{a_1,a_2,a_3\}\backslash\{u,v\}$. Then the subgraph
induced by $\{a_1,b_1,u,v\}$ is a claw in $G$, a contradiction. This
implies that $E(G')\backslash E(G)=\emptyset$. Hence
$G=G'=N_{n-3,3}$.

Now we assume that for every vertex $x\in V(G-H)$,
$N_H(x)\neq\{a_1,a_2,a_3\}$.

If $V(G'-H)=\emptyset$, then $G'=N=N_{3,3}$. By the analysis above,
we can also see that $G=G'=N_{3,3}$. So we assume that
$V(G'-H)\neq\emptyset$.

Let $x$ be a vertex in $G'-H$. Thus $N_H(x)$, and then $N(x)$
induces two disjoint cliques. Note that $N_H(x)\neq\{b_1,b_2,b_3\}$.
We assume without loss of generality that $a_1\in N_H(x)$. If
$a_2\in N_H(x)$, then $a_3\in N_H(x)$; otherwise $a_1$ will be a bad
vertex of $G'$. But in this case $N_H(x)=\{a_1,a_2,a_3\}$, a
contradiction. This implies that $a_2\notin N_H(x)$ and similarly,
$a_3\notin N_H(x)$. Note that $N_H(x)\neq\{a_1,b_2,b_3\}$. We have
$b_1\in N_H(x)$. Without loss of generality, we assume that
$N_H(x)=\{a_1,b_1,b_2\}$. If $G'-H$ has the only one vertex $x$,
then $b_1a_1xb_2a_2a_3b_3$ is a Hamilton path of $G'$. By Theorem
\ref{LeBrFaRy}, $G$ is traceable. Now we assume that there is a
second vertex $x'\in V(G'-H)$.

Since both $\{x,x',b_1,b_2\}$ and $\{x,x',b_2,a_2\}$ induce no
claws, it follows either $a_1,b_1\in N_H(x')$ or $b_2\in N_H(x')$. If
$a_1,b_1\in N_H(x')$, then $b_2\notin N_H(x')$; otherwise $x$ is a
bad vertex of $G'$. Similarly as the case of $x$ above, we can see
that $a_2,a_3\notin N_H(x')$. Thus $N_H(x')=\{a_1,b_1,b_3\}$. If
$b_2\in N_H(x')$, then $a_1,b_1\notin N_H(x')$; otherwise $x$ is a
bad vertex of $G'$. If $a_2\in N_H(x')$, then $b_2$ is a bad vertex
of $G'$, a contradiction. Thus we have $N_H(x')=\{b_2,a_3,b_3\}$. In
conclusion, either $N_H(x')=\{a_1,b_1,b_3\}$ or
$N_H(x')=\{b_2,a_3,b_3\}$.

Suppose that there is a third vertex $x''$. Then similarly as the
case of $x'$, $N_H(x'')=\{a_1,b_1,b_3\}$ or
$N_H(x'')=\{b_2,a_3,b_3\}$. But if $x'$ and $x''$ have the same
neighborhood in $H$, then $x'$ will be a bad vertex, a
contradiction. So we assume without loss of generality that
$N_H(x')=\{a_1,b_1,b_3\}$ and $N_H(x'')=\{b_2,a_3,b_3\}$. Then $x'$
is also a bad vertex, a contradiction. Thus $x,x'$ are the only two
vertices in $G-H$, and $b_1xx'b_3a_3a_1a_2b_2$ is a Hamilton path of
$G'$. By Theorem \ref{LeBrFaRy}, $G$ is traceable.

\begin{case}
$e(G')=\binom{n-3}{2}+2$.
\end{case}

In this case $G=G'$ and there is a vertex $x$ in $G-H$ such that
$d_H(x)=2$ or $xx'\notin E(G)$ for some $x'\in V(G-H)$. Let
$G_1=G-x$. Since every vertex in $G-H-x$ is adjacent to three
vertices in $H$, $G_1$ is connected. Note that
\begin{align*}
  e(G_1)=e(G)-d(x)=\binom{n-3}{2}+2-(n-5)=\binom{n-4}{2}+3.
\end{align*}
Using the conclusion of Case 1, we can obtain that $G_1$ is
traceable or $G_1=N_{n-4,3}$.

Suppose first that $G_1=N_{n-4,3}$. Let $a_1b_1,a_2b_2,a_3b_3$ be
the three pendent edges of $G_1$, where $a_1,a_2,a_3$ are contained
in a clique of $G_1$. Note that $G$ is closed and $N(x)$ is either a
clique or the disjoint union of two cliques. Also note that if $x$
is adjacent to some two vertices of a maximal clique of $G$, then
$x$ will be adjacent to every vertex of the maximal clique of $G$.
Since $d(x)=n-5$, the neighborhood of $x$ does not include
$V(G_1)\backslash\{b_1,b_2,b_3\}$. If $x$ is adjacent to two pendant
vertices, say $b_1,b_2$, then let $P$ be a Hamilton path of the
complete graph $G_1-\{b_1,b_2,b_3\}$ from $a_2$ to $a_3$. Then
$b_1xb_2a_2Pa_3b_3$ is a Hamilton path of $G$. Now we assume that
$x$ is adjacent to exactly one vertex of $\{b_1,b_2,b_3\}$. Suppose
without loss of generality that $b_1\in N(x)$. Since $d(x)=n-5$, we
can see that $n=7$, $G_1=N$ and $N(x)=\{a_1,b_1\}$. Hence $G=L$.

Now we assume that $G_1$ is traceable. Let $P=v_1v_2\ldots v_{n-1}$
be a Hamilton path of $G_1$. If $v_1x\in E(G)$ or $v_{n-1}x\in
E(G)$, then $G$ is traceable. So we assume that $v_1x,v_{n-1}x\notin
E(G)$. If $x$ is adjacent to two successive vertices on $P$, then
$G$ is traceable. So we assume that $x$ is not adjacent to two
successive vertices on $P$. This implies that $n-1-d(x)\geq d(x)+1$.
Since $d(x)=n-5$, we have $n\leq 8$. Note that $n\geq 7$. We can see
that either $xv_2$ or $xv_{n-2}$ is in $E(G)$. We assume without
loss of generality that $xv_2\in E(G)$. Thus $v_1v_3\in E(G)$;
otherwise the subgraph induced by $\{v_2,v_1,v_3,x\}$ is a claw.
Hence $P'=xv_2v_1v_3\ldots v_{n-1}$ is a Hamilton path of $G$.
\end{proof}

\begin{lemma}\label{LeEdge'}
Let $G$ be a connected claw-free graph on $n\geq 24$ vertices and
$m$ edges. If
\begin{align*}
m>{n \choose 2}-\left(1+\sqrt{3n-8}\right)^2,
\end{align*}
then $G$ is traceable unless $G\subseteq N_{n-3,3}$.
\end{lemma}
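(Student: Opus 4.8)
The plan is to recast the edge hypothesis through the complement and then pass to the closure. First I would rewrite the assumption as $e(\overline{G})=\binom{n}{2}-m<\left(1+\sqrt{3n-8}\right)^2$, so that $\overline{G}$ has only about $3n$ edges; equivalently $m>\binom{n-3}{2}+1-2\sqrt{3n-8}$, a band that dips just below the threshold $\binom{n-3}{2}+2$ of Lemma~\ref{LeEdge}. Setting $G'=cl(G)$, Theorems~\ref{LeRy} and~\ref{LeBrFaRy} say that $G'$ is claw-free and that $G$ is traceable iff $G'$ is, while $G\subseteq G'$ gives $e(\overline{G'})\le e(\overline{G})<\left(1+\sqrt{3n-8}\right)^2$. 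Hence it suffices to prove that the \emph{closed} graph $G'$ is either traceable or a spanning subgraph of $N_{n-3,3}$, since in the latter case $G\subseteq G'\subseteq N_{n-3,3}$. (In the top of the range, $e(G')\ge\binom{n-3}{2}+2$, one could even invoke Lemma~\ref{LeEdge} on $G'$, as $n\ge24>7$ excludes the sporadic graph $L$; but I would handle the whole range uniformly by the degree argument below.)

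The core step is a degree dichotomy feeding into Lemma~\ref{LeDegreesum}. Put $R=\{v\in V(G'):d_{G'}(v)<(n-1)/2\}$ and $K=V(G')\setminus R$. Each $v\in R$ has $d_{\overline{G'}}(v)>(n-1)/2$, so $|R|\cdot\frac{n-1}{2}<\sum_{v\in R}d_{\overline{G'}}(v)\le 2e(\overline{G'})<2\left(1+\sqrt{3n-8}\right)^2$, which bounds $|R|$ by a fixed constant (tending to $12$, at most $14$ for $n\ge 24$). If $K$ is not a clique, choose nonadjacent $u,v\in K$; then $d_{G'}(u)+d_{G'}(v)\ge n-1$, and since $G'$ is closed and claw-free, Lemma~\ref{LeDegreesum} gives that $G'$ is traceable, finishing this branch.

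It remains to treat the case that $K$ is a clique, so $G'$ is a dominating clique of size at least $n-|R|$ plus a bounded set $R$ of extra vertices. Here I would call $v\in R$ \emph{bad} if it has at most one neighbour in $K$. A bad vertex is missing at least $|K|-1\ge n-|R|-1$ edges to $K$, so the bound $e(\overline{G'})<\left(1+\sqrt{3n-8}\right)^2$ forces at most three bad vertices once $n\ge 24$; this is exactly the arithmetic that excludes a fourth pendant and so calibrates the threshold. Using that $G'$ is closed (so $N_{G'}(v)$ is a clique or a union of two cliques) together with claw-freeness, I would argue that the bad vertices behave like the pendant vertices of $N_{n-3,3}$: each attaches to a single clique vertex and distinct bad vertices use distinct attachment points. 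Every non-bad vertex of $R$ has at least two neighbours in $K$ and can be spliced into a Hamilton path of the clique $K$ (replacing a clique edge $xy$ by $x\,v\,y$, greedily over $R$). Threading all of these in, the only obstruction to a Hamilton path is being left with three degree-one vertices hanging off three distinct clique vertices, which with the remaining $n-3$ vertices forming a subgraph of $K_{n-3}$ is precisely the situation $G'\subseteq N_{n-3,3}$.

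The main obstacle is this last extremal step. The delicate points are making the simultaneous splicing rigorous when several vertices of $R$ (each with only two guaranteed clique-neighbours) must be inserted at once, controlling bad vertices that are adjacent to one another inside $R$ via the closed-neighbourhood structure rather than by a naive claw argument, and sharpening the count so the fourth bad vertex is genuinely excluded for \emph{every} $n\ge 24$ and not merely asymptotically — this is where the constant $\left(1+\sqrt{3n-8}\right)^2$ and the hypothesis $n\ge 24$ are consumed. Once the bad vertices are pinned to at most three genuine pendants and all other vertices of $R$ are shown insertable, identifying the unique non-traceable outcome with a spanning subgraph of $N_{n-3,3}$ is routine.
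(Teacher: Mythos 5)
Your overall strategy (complement/closure reduction, then a degree dichotomy feeding Lemma~\ref{LeDegreesum}, then a structural analysis of the low-degree set) is genuinely different from the paper's, and its first two steps are sound: the bound $|R|\le 14$ and the observation that if $K$ is not a clique then Lemma~\ref{LeDegreesum} applies are both correct. However, the step you lean on to ``calibrate the threshold'' contains a genuine quantitative gap. You claim that a fourth bad vertex is excluded because each bad vertex misses at least $|K|-1\ge n-|R|-1$ edges to $K$. With $|R|$ only bounded by $14$, four bad vertices force merely $4(n-15)$ missing edges, and the inequality $4(n-15)\ge \left(1+\sqrt{3n-8}\right)^2=3n-7+2\sqrt{3n-8}$ fails for all $n$ up to roughly $84$. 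At $n=24$ the numbers are stark: $e(\overline{G'})<81$ while a bad vertex misses only $\ge 9$ edges, so your count permits up to nine bad vertices. The extremal graph $K_{n-4}$ with four pendants is indeed below the edge threshold for $n\ge 23$, but seeing that requires knowing $|K|\ge n-4$, not $|K|\ge n-14$; closing this circularity (the remaining ten vertices of $R$ could each have degree just under $(n-1)/2$ and absorb the slack) is exactly what your sketch does not do. The final splicing/extremal step is also left as an acknowledged to-do list (simultaneous insertion of several two-attachment vertices, bad vertices adjacent to one another), so the proof is not complete even granting the count.

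For comparison, the paper avoids all of this: it first shows (Claim 1) that a counterexample must be a block-chain, by locating a block with three cut-vertices and counting edges across the resulting decomposition; it then applies Theorem~\ref{LeLiBrZh} to conclude that the closure must contain an induced copy of $M$, and a short count of possible neighbours of external vertices in $M$ (at most $5$, and at most one vertex achieving $5$) caps $e(G')$ at $8+\binom{n-8}{2}+4(n-8)+1$, contradicting the hypothesis for $n\ge 24$. That route consumes the constant $\left(1+\sqrt{3n-8}\right)^2$ in two clean inequalities rather than in a delicate extremal classification, which is precisely the part of your argument that does not close.
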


\begin{proof}
We assume the opposite.

\begin{claim}
$G$ is a block-chain.
\end{claim}

\begin{proof}
Suppose that $G$ is not a block-chain. Since $G$ is claw-free, every
cut-vertex of $G$ is contained in exactly two blocks. This implies
that $G$ has a block $B_0$ which contains at least three
cut-vertices of $G$. Let $a_1,a_2,a_3$ be three cut-vertices of $G$
contained in $B_0$. Let $B_i$, $i=1,2,3$, be the component of
$G-B_0$ which has a neighbor of $a_i$. Let
$H_0=G-(\bigcup_{i=1}^3B_i)$ and $H_i=G[V(B_i)\cup\{a_i\}]$,
$i=1,2,3$. Note that $\nu(H_0)\geq 3$. If
$\nu(H_1)=\nu(H_2)=\nu(H_3)=2$, then $G\subseteq N_{n-3,3}$. Now we
assume without loss of generality that $\nu(H_1)\geq 3$.

Note that $\sum_{i=0}^3\nu(H_i)=n+3$. Thus
\begin{align*}
e(G)=\sum_{i=0}^3e(H_i)\leq\sum_{i=0}^3{\nu(H_i) \choose 2}\leq{n-4
\choose 2}+5\leq{n \choose 2}-\left(1+\sqrt{3n-8}\right)^2
\end{align*}
(noting that $n\geq 24$), a contradiction.
\end{proof}

Let $G'=cl(G)$. If $G'$ is $M$-free, then by Theorems \ref{LeLiBrZh}
and \ref{LeBrFaRy}, $G'$, and then $G$, is traceable. Now we assume
that $G'$ has an induced subgraph $H\sim M$. We denote the vertices
of $H$ as in Fig. 2.

\begin{claim}
Every vertex in $G'-H$ has at most 5 neighbors in $H$; and there is
at most one vertex in $G'-H$ having exactly 5 neighbors in $H$.
\end{claim}

\begin{proof}
Let $x$ be a vertex in $G'-H$. Note that $N_{H}(x)$ is either a
clique or the disjoint union of two cliques. This implies that
$d_{H}(x)\leq 5$. Moreover, if $d_{H}(x)=5$, then
$N_{H}(x)=\{a_1,a_2,a_3,c_1,d_1\}$.

If there are two vertices, say $x$ and $x'$, such that each one has
5 neighbors in $H$, then $N_H(x)=N_H(x')=\{a_1,a_2,a_3,c_1,d_1\}$.
But in this case $x$ will be a bad vertex of $G'$, a contradiction.
\end{proof}

By Claim 2, we have
\begin{align*} e(G)\leq e(G')=e(H)+e(G'-H)+e_{G'}(H,G'-H)\leq
8+{n-8 \choose 2}+4(n-8)+1.
\end{align*}
Thus
\begin{align*}
8+{n-8 \choose 2}+4(n-8)+1>{n \choose
2}-\left(1+\sqrt{3n-8}\right)^2.
\end{align*}
This implies that $n\leq 20$, a contradiction.
\end{proof}
The next theorem we need is a famous theorem due to Hong \cite{Hon}.
In fact, the spectral inequality also works for graphs without
isolated vertices, see \cite{Hon}.

\begin{theorem}[Hong \cite{Hon}]\label{LeHo}
Let $G$ be a connected graph on $n$ vertices and $m$ edges. Then
\begin{align*}
\mu(G)\leq\sqrt{2m-n+1}.
\end{align*}
The equality holds if and only if $G=K_n$ or $K_{1,n-1}$.
\end{theorem}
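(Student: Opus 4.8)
The plan is to combine Perron--Frobenius theory with a two-step walk count at the heaviest coordinate of the principal eigenvector, and then to extract the ``$-n+1$'' correction purely from connectivity. Since $G$ is connected, its adjacency matrix $A$ is irreducible, so by Perron--Frobenius $\mu=\mu(G)$ is a simple eigenvalue with a positive eigenvector $x=(x_1,\dots,x_n)^{T}$. First I would relabel the vertices so that $x_1=\max_i x_i$, write $d_i$ for the degree of vertex $i$, and reduce the statement to the equivalent inequality $\mu^2\le 2m-n+1$.

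The main computation is to read the eigen-equation for $A^2$ at the heaviest vertex. Since $A^2x=\mu^2x$, and $(A^2)_{1j}$ counts the common neighbours of vertices $1$ and $j$ while $(A^2)_{11}=d_1$, I get
\begin{align*}
\mu^2 x_1=(A^2x)_1=d_1x_1+\sum_{j\neq 1}(A^2)_{1j}\,x_j\le d_1x_1+x_1\sum_{j\neq 1}(A^2)_{1j},
\end{align*}
where the inequality uses $x_j\le x_1$ for all $j$ and $x_1>0$. A short count gives $\sum_{j\neq 1}(A^2)_{1j}=\sum_{k\sim 1}(d_k-1)=m_1-d_1$, where $m_1:=\sum_{k\sim 1}d_k$ is the $2$-degree of vertex $1$; dividing by $x_1$ then yields $\mu^2\le m_1$. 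The decisive second step is to bound $m_1$ by $2m-n+1$. Writing $2m=\sum_k d_k=d_1+m_1+\sum_{k\notin N[1]}d_k$, where $N[1]$ is the closed neighbourhood of vertex $1$, and using that each of the $n-1-d_1$ vertices outside $N[1]$ has degree at least $1$ because $G$ is connected, I obtain $m_1\le 2m-d_1-(n-1-d_1)=2m-n+1$. This gives $\mu\le\sqrt{2m-n+1}$.

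For the equality characterisation I would trace back through both inequalities. Equality forces, on the one hand, that every vertex outside $N[1]$ has degree exactly $1$, and on the other hand that $x_j=x_1$ for every $j$ with $(A^2)_{1j}>0$, i.e.\ on every vertex sharing a neighbour with vertex $1$. I expect the analysis to split according to whether vertex $1$ has non-neighbours: when $N[1]=V(G)$ the second inequality is automatically tight and the eigenvector condition pushes one towards $K_n$ (or makes the vertices beyond vertex $1$ into pendants, giving $K_{1,n-1}$), while the presence of degree-one non-neighbours should collapse the structure to the star. The hard part will be exactly this equality bookkeeping: ruling out hybrid configurations that meet the degree condition but violate the eigenvector condition, or conversely, so that only $K_n$ and $K_{1,n-1}$ survive. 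The inequality itself is short once the heaviest-coordinate idea is in place; the delicate point is making the two equality conditions interact to isolate precisely these two extremal graphs.
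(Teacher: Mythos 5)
The paper does not prove this statement at all: it is imported verbatim from Hong's paper \cite{Hon} and used as a black box (and in fact only the inequality, never the equality case, is invoked in the proofs of Theorems \ref{ThMuG} and \ref{ThComponent}). So there is no in-paper argument to compare against; your proposal has to stand on its own. The inequality part does: the chain $\mu^2 x_1=(A^2x)_1\le x_1\sum_j(A^2)_{1j}=x_1 m_1$ with $m_1=\sum_{k\sim 1}d_k$, followed by $2m=d_1+m_1+\sum_{k\notin N[1]}d_k\ge d_1+m_1+(n-1-d_1)$, is correct and is essentially Hong's original argument (which actually needs only $\delta(G)\ge 1$, not connectivity, for the bound itself). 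All the intermediate identities you state check out.

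The gap is the equality characterisation, which you explicitly leave as a plan (``the hard part will be exactly this equality bookkeeping''). Two concrete points make this more than routine. First, equality in your first step forces $x_j=x_1$ only for those $j\ne 1$ with $(A^2)_{1j}>0$, i.e.\ vertices sharing a common neighbour with vertex $1$; vertices at distance $\ge 3$ from vertex $1$ are not directly controlled, and you must first use the degree-one condition on $V\setminus N[1]$ together with connectivity to show no such vertices can exist in the equality case. Second, once some $j\ne 1$ satisfies $x_j=x_1$, that vertex is itself a maximum-coordinate vertex and the whole argument (both inequalities, including ``every vertex outside $N[j]$ has degree $1$'') must be re-run at $j$; it is this propagation, iterated over all vertices attaining the maximum coordinate, that eliminates the hybrid configurations and leaves only $K_n$ and $K_{1,n-1}$. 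Note also that in $K_{1,n-1}$ the first inequality is tight vacuously ($(A^2)_{1j}=0$ for every leaf $j$ when $1$ is the centre), so the case split you anticipate on whether $N[1]=V(G)$ is indeed where the two extremal graphs separate; but as written the ``only if'' direction is not a proof, only a credible outline. Since the statement you are proving asserts the full equivalence, this part needs to be carried out (or cited) before the proposal is complete.
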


\begin{theorem}[Hofmeister \cite{Hof}]\label{LeHo'}
Let $G$ be a graph. Then
\begin{align*}
\mu(G)\geq\sqrt{\frac{\sum_{v\in V(G)}d^2(v)}{n}}.
\end{align*}
\end{theorem}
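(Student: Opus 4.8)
The plan is to prove the inequality by applying the Rayleigh--Ritz variational principle to the \emph{square} of the adjacency matrix. Let $A$ denote the adjacency matrix of $G$, and recall that since $A$ is real and symmetric its eigenvalues $\lambda_1\ge\lambda_2\ge\cdots\ge\lambda_n$ are all real. The key preliminary observation is that $\mu(G)$ is the spectral radius of $A$, i.e.\ $\mu(G)=\max_i|\lambda_i|$ (exactly the Perron--Frobenius fact quoted in the introduction), so the largest eigenvalue of the symmetric matrix $A^2$, whose eigenvalues are the numbers $\lambda_i^2$, is precisely $\mu(G)^2=(\max_i|\lambda_i|)^2$.

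Next I would invoke the Rayleigh quotient for $A^2$ together with a well-chosen test vector. For the symmetric matrix $A^2$ and any nonzero $x\in\mathbb{R}^n$ one has
\begin{align*}
\mu(G)^2=\lambda_{\max}(A^2)\ge\frac{x^{\top}A^2x}{x^{\top}x}.
\end{align*}
The decisive choice is the all-ones vector $\mathbf{1}=(1,\dots,1)^{\top}$, for which $A\mathbf{1}$ is exactly the degree vector: its coordinate at a vertex $v$ is $\sum_{u}A_{vu}=d(v)$. Consequently
\begin{align*}
\mathbf{1}^{\top}A^2\mathbf{1}=(A\mathbf{1})^{\top}(A\mathbf{1})=\sum_{v\in V(G)}d^2(v),
\qquad \mathbf{1}^{\top}\mathbf{1}=n.
\end{align*}

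Substituting these two evaluations into the Rayleigh inequality gives $\mu(G)^2\ge\frac{1}{n}\sum_{v\in V(G)}d^2(v)$, and taking square roots yields the stated bound. There is essentially no hard step here: the entire argument is a single application of the variational principle, and the only point that genuinely deserves care is the identification of $\mu(G)^2$ with $\lambda_{\max}(A^2)$ rather than with $\lambda_i^2$ coming from a large-in-magnitude \emph{negative} eigenvalue --- but this is guaranteed precisely because $\mu(G)$ is the spectral radius. One could equally phrase the same computation combinatorially, noting that $\mathbf{1}^{\top}A^2\mathbf{1}$ counts closed walks of length two and hence equals $\sum_{v}d^2(v)$, but the matrix formulation above is the cleanest route.
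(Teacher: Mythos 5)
Your argument is correct. The paper quotes Hofmeister's inequality as an external result (Theorem~\ref{LeHo'}, citing \cite{Hof}) and gives no proof of its own, so there is nothing to compare against; what you have written is the standard and complete derivation: $\mu(G)^2=\lambda_{\max}(A^2)$ because the eigenvalues of $A^2$ are the squares $\lambda_i^2$ and $\mu(G)=\max_i|\lambda_i|$, and the Rayleigh quotient of $A^2$ at the all-ones vector evaluates to $\frac{1}{n}\sum_{v}d^2(v)$ since $A\mathbf{1}$ is the degree vector. You are also right to flag the one delicate point, namely that one must take $\lambda_{\max}(A^2)$ rather than $\lambda_1^2$; your handling of it is exactly what is needed, and it avoids any appeal to connectivity or nonnegativity of the Perron root.
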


\section{Proofs of the main results}

\noindent{\bf {Proof of Theorem \ref{ThMuG}.}} By Theorem
\ref{LeHo}, $\mu(G)\leq\sqrt{2m-n+1}$. Thus $n-4\leq\sqrt{2m-n+1}$
and
\begin{align*}
m\geq\left\lceil\frac{(n-3)(n-4)+3}{2}\right\rceil=\binom{n-3}{2}+2.
\end{align*}
Note that $\mu(M)=2.6935\ldots<3$. By Lemma \ref{LeEdge}, $G$ is
traceable or $G=N_{n-3,3}$. \qed

\bigskip
\noindent{\bf {Proof of Theorem \ref{ThComponent}.}} We first give a
bound on the value of $\mu(\overline{N_{n-3,3}})$. By using Theorem
2.8 in \cite{CDS} and some computing, we know
\begin{align*}
\mu(K_k\vee (n-k)K_1)=\frac{k-1+\sqrt{4kn-(3k-1)(k+1)}}{2}.
\end{align*}
Thus $\mu(K_3\vee (n-3)K_1)=1+\sqrt{3n-8}$. From the fact
$\overline{N_{n-3,3}}\subset K_3\vee (n-3)K_1$, we obtain
\begin{align*}
\mu(\overline{N_{n-3,3}})<1+\sqrt{3n-8}
\end{align*}
for any $n\geq 6$.

Now we prove the theorem. The idea of our proof comes from
\cite{FN}. We assume that $G$ is not traceable. Let $G'=cl(G)$. By
Theorem \ref{LeBrFaRy}, $G'$ is not traceable. By Lemma
\ref{LeDegreesum}, for any pair of nonadjacent vertices $u,v$ of
$G'$, $d_{G'}(u)+d_{G'}(v)\leq n-2$, and hence
\begin{align*}
d_{\overline{G'}}(u)+d_{\overline{G'}}(v)\geq 2(n-1)-(n-2)=n.
\end{align*}
Furthermore, we have
\begin{align*}
\sum_{v\in V(G)}d_{\overline{G'}}^2(v)=\sum_{uv\in
E(\overline{G'})}(d_{\overline{G'}}(u)+d_{\overline{G'}}(v))\geq
ne(\overline{G'}).
\end{align*}
Note that $\overline{G'}\subseteq\overline{G}$. By Theorem
\ref{LeHo'},
\begin{align*}
\mu(\overline{G})\geq\mu(\overline{G'})\geq\sqrt{\frac{\sum_{v\in
V(G)}d_{\overline{G'}}^2(v)}{n}}\geq\sqrt{e(\overline{G'})}.
\end{align*}
Thus we have
\begin{align*}
e(G')={n \choose 2}-e(\overline{G'})\geq{n \choose
2}-\mu^2(\overline{G})>{n \choose 2}-\left(1+\sqrt{3n-8}\right)^2.
\end{align*}
Recall that $G'$ is claw-free and not traceable. By Lemma
\ref{LeEdge'}, $G'\subseteq N_{n-3,3}$. Thus $G\subseteq N_{n-3,3}$.
But if $G\subset N_{n-3,3}$, then
$\mu(\overline{G})>\mu(\overline{N_{n-3,3}})$, a contradiction. This
implies $G=N_{n-3,3}$. The proof is complete. \qed

\section{Concluding remarks}

In this section, we give a brief discussion of the existence of
Hamilton cycles in claw-free graphs under spectral condition.

Following the notations in \cite{Br}, we use $\mathcal{P}$ to denote
the class of graphs obtained by taking two vertex-disjoint triangles
$a_1a_2a_3a_1$ and $b_1b_2b_3b_1$, and by joining every pair of
vertices $\{a_i,b_i\}$ by a triangle or by a path of order at least
3. We use $P_{x_i,x_2,x_3}$ to denote the graph from $\mathcal{P}$,
where $x_i=T$ if $\{a_i,b_i\}$ is joined by a triangle; and
$x_i=k_i$ if $\{a_i,b_i\}$ is joined by a path of order $k_i\geq 3$.

\begin{center}
\setlength{\unitlength}{0.75pt}\small
\begin{picture}(560,120)
\put(0,0){\multiput(20,30)(80,0){2}{\put(0,0){\circle*{4}}
\put(0,80){\circle*{4}} \put(20,40){\circle*{4}}
\put(0,0){\line(0,1){80}} \put(0,0){\line(1,2){20}}
\put(0,80){\line(1,-2){20}}} \put(20,30){\line(1,0){80}}
\put(20,30){\line(2,1){40}} \put(100,30){\line(-2,1){40}}
\put(20,110){\line(1,0){80}} \put(20,110){\line(2,-1){40}}
\put(100,110){\line(-2,-1){40}} \put(60,50){\circle*{4}}
\put(60,90){\circle*{4}} \put(80,70){\circle*{4}}
\put(60,50){\line(0,1){40}} \put(60,50){\line(1,1){20}}
\put(60,90){\line(1,-1){20}} \put(50,10){$P_{T,T,T}$}}

\put(140,0){\multiput(20,30)(80,0){2}{\put(0,0){\circle*{4}}
\put(0,80){\circle*{4}} \put(20,40){\circle*{4}}
\put(0,0){\line(1,2){20}} \put(0,80){\line(1,-2){20}}}
\put(20,30){\line(1,0){80}} \put(100,30){\line(0,1){80}}
\put(20,30){\line(2,1){40}} \put(100,30){\line(-2,1){40}}
\put(20,110){\line(1,0){80}} \put(20,110){\line(2,-1){40}}
\put(100,110){\line(-2,-1){40}} \put(60,50){\circle*{4}}
\put(60,90){\circle*{4}} \put(80,70){\circle*{4}}
\put(60,50){\line(0,1){40}} \put(60,50){\line(1,1){20}}
\put(60,90){\line(1,-1){20}} \put(50,10){$P_{3,T,T}$}}

\put(280,0){\multiput(20,30)(80,0){2}{\put(0,0){\circle*{4}}
\put(0,80){\circle*{4}} \put(20,40){\circle*{4}}
\put(0,0){\line(1,2){20}} \put(0,80){\line(1,-2){20}}}
\put(20,30){\line(1,0){80}} \put(100,30){\line(0,1){80}}
\put(20,30){\line(2,1){40}} \put(100,30){\line(-2,1){40}}
\put(20,110){\line(1,0){80}} \put(20,110){\line(2,-1){40}}
\put(100,110){\line(-2,-1){40}} \put(60,50){\circle*{4}}
\put(60,90){\circle*{4}} \put(80,70){\circle*{4}}
\put(60,50){\line(1,1){20}} \put(60,90){\line(1,-1){20}}
\put(50,10){$P_{3,3,T}$}}

\put(420,0){\multiput(20,30)(80,0){2}{\put(0,0){\circle*{4}}
\put(0,80){\circle*{4}} \put(20,40){\circle*{4}}
\put(0,0){\line(1,2){20}} \put(0,80){\line(1,-2){20}}}
\put(20,30){\line(1,0){80}} \put(20,30){\line(2,1){40}}
\put(100,30){\line(-2,1){40}} \put(20,110){\line(1,0){80}}
\put(20,110){\line(2,-1){40}} \put(100,110){\line(-2,-1){40}}
\put(60,50){\circle*{4}} \put(60,90){\circle*{4}}
\put(80,70){\circle*{4}} \put(60,50){\line(1,1){20}}
\put(60,90){\line(1,-1){20}} \put(50,10){$P_{3,3,3}$}}
\end{picture}

{\small Fig. 3. 2-connected claw-free non-Hamiltonian graphs of
order 9.}
\end{center}

Brousek \cite{Br} showed that every 2-connected claw-free
non-Hamiltonian graph contains a graph in $\mathcal{P}$ as an
induced subgraph. By Brousek's result, we can see that the smallest
2-connected claw-free non-Hamiltonian graphs have order 9, and there
are exactly four such graphs, namely, $P_{T,T,T},$ $P_{3,T,T},$
$P_{3,3,T}$ and $P_{3,3,3}$, see Fig. 3.

Let $H$ be a graph from Fig. 3, and let $G$ be a graph obtained from
$H$ by replacing one triangle by a complete graph $K_{n-6}$. Then
$G$ is not Hamiltonian and $\mu(G)>n-7$. Recently, we get the
following result.
\begin{theorem}\label{ThHamiltonian}
Suppose that $G$ is a 2-connected claw-free graph of sufficiently
large order $n$. If $\mu(G)\geq n-7$, then $G$ is Hamiltonian or $G$
is a subgraph of a graph which is obtained from $P_{T,T,T}$,
$P_{3,T,T}$, $P_{3,3,T}$ or $P_{3,3,3}$ by replacing a triangle by
$K_{n-6}$.
\end{theorem}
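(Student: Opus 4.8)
The plan is to follow the same two-step strategy used for Theorem~\ref{ThMuG}: first convert the spectral hypothesis into an edge count via Hong's inequality, and then prove a purely edge-counting lemma which is the Hamiltonian analogue of Lemma~\ref{LeEdge'}. Applying Theorem~\ref{LeHo}, from $\mu(G)\ge n-7$ and $\mu(G)\le\sqrt{2m-n+1}$ one gets $(n-7)^2\le 2m-n+1$, i.e.
\begin{align*}
m\ge\frac{n^2-13n+48}{2}=\binom{n-6}{2}+3.
\end{align*}
So it suffices to prove the following edge lemma: if $G$ is a $2$-connected claw-free graph on sufficiently many vertices with $e(G)\ge\binom{n-6}{2}+3$, then $G$ is Hamiltonian unless $G$ is a subgraph of one of the four blow-ups; Theorem~\ref{ThHamiltonian} then follows exactly as Theorem~\ref{ThMuG} follows from Lemma~\ref{LeEdge} (note each blow-up contains $K_{n-6}$, so it does satisfy $\mu\ge n-7$, confirming it is the genuine extremal case).

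To prove the edge lemma I would pass to the Ryj\'{a}\v{c}ek closure $G'=cl(G)$. By Theorem~\ref{LeRy} $G'$ is claw-free, $e(G')\ge e(G)$, and by the closure theorem of \cite{R} (the Hamiltonian counterpart of Theorem~\ref{LeBrFaRy}) $G$ is Hamiltonian if and only if $G'$ is. Assume $G'$ is not Hamiltonian; then Brousek's theorem \cite{Br} yields an induced subgraph $H$ of $G'$ belonging to $\mathcal{P}$. The hypothesis means $\overline{G'}$ has at most $6n-24$ edges, so $G'$ is very dense; in particular at most $11$ vertices have degree $\le 2$, since such a vertex contributes $\ge n-3$ to $\sum_v d_{\overline{G'}}(v)=2e(\overline{G'})\le 12n-48$. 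Because $G'$ is closed, for every $x\in V(G'-H)$ the set $N_{G'}(x)\cap V(H)$ is a clique or the disjoint union of two non-adjacent cliques of $H$; as every clique of a member of $\mathcal{P}$ has at most three vertices, $d_H(x)\le 6$. The internal vertices of the path-joins of $H$ have degree $2$ in $H$ and, by the two-clique description, a large $G'$-degree would force a large missing bipartite block in $\overline{G'}$; combining this with the fact that the non-edges inside the induced $H$ are non-edges of $G'$ (so $\binom{|V(H)|}{2}-e(H)\le 6n-24$) and with the bound on the number of degree-$\le 2$ vertices forces every join of $H$ to be a triangle or a path of order~$3$. Hence $|V(H)|=9$ and $H$ is one of $P_{T,T,T}$, $P_{3,T,T}$, $P_{3,3,T}$, $P_{3,3,3}$.

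With $H$ reduced to order~$9$ I would finish as in Case~1 of Lemma~\ref{LeEdge}: comparing $e(G')=e(H)+e(G'-H)+e_{G'}(H,G'-H)$ with $\binom{n-6}{2}+3$ forces $G'-H$ to be complete and every vertex of $G'-H$ to send its edges to one and the same triangle of $H$ (the one to be blown up), no vertex being allowed to attach to two non-adjacent triangles (such as both main triangles of $P_{3,3,3}$) without producing a Hamilton cycle, a bad vertex, or a claw. Thus $G'-H$ together with that triangle induces $K_{n-6}$, so $G'$ is exactly the corresponding blow-up; finally, as in Lemma~\ref{LeEdge}, one checks that the few edges of $E(G')\setminus E(G)$ cannot be pendant, create a claw, or destroy $2$-connectivity, so $G\subseteq G'$ lies inside the same blow-up.

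The main obstacle will be the structural reduction of $H$ to order~$9$. The crude bound $d_H(x)\le 6$ is by itself too weak: for $H=P_{3,3,3}$ an external vertex may genuinely be adjacent to both main triangles, and the resulting edge count does \emph{not} contradict $\binom{n-6}{2}+3$; likewise the members of $\mathcal{P}$ of order $10$ and $11$ (a single path-join of order~$4$ or~$5$) survive the crude count. Excluding these non-minimal configurations therefore requires the closure structure—using that an internal path vertex has a two-clique neighbourhood while $\overline{G'}$ is sparse—rather than counting alone, and this is where I expect the delicate work to lie. Once the non-minimal members are ruled out and the single common triangle is identified, the remainder is the finite case check over the four skeletons, together with the routine ``sufficiently large $n$'' bookkeeping in the inequalities.
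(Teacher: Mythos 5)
The paper itself contains no proof of Theorem~\ref{ThHamiltonian}: it is announced in the concluding remarks and its proof is deferred to the preprint \cite{LN}, so there is no argument in this text to compare yours against. Judged on its own terms, your reduction of the spectral hypothesis to the edge bound $m\geq\binom{n-6}{2}+3$ via Theorem~\ref{LeHo} is correct, and the overall architecture (closure, Brousek's induced subgraph $H\in\mathcal{P}$, degree bounds from $G'-H$ into $H$, then a finite case analysis) is the natural analogue of Lemmas~\ref{LeEdge} and~\ref{LeEdge'}.

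However, the proposal has a genuine gap exactly where you say you ``expect the delicate work to lie,'' and that gap is essentially the whole theorem. The crude counting you set up does not close: with $|V(H)|=9$ and $d_H(x)\leq 6$ one only gets $e(G')\leq e(H)+\binom{n-9}{2}+6(n-9)$, which exceeds the threshold $\binom{n-6}{2}+3$ by roughly $3n$, so neither the exclusion of the larger members of $\mathcal{P}$ (a path-join of order $4$ or $5$) nor the claim that all external vertices attach to one common triangle of $H$ follows from the edge count. The sentences meant to bridge this --- that ``a large $G'$-degree would force a large missing bipartite block in $\overline{G'}$,'' and that no external vertex can attach to two non-adjacent triangles ``without producing a Hamilton cycle, a bad vertex, or a claw'' --- are assertions rather than arguments; the second in particular requires exhibiting a Hamilton cycle in a graph whose global structure you have not yet pinned down, which is circular as written. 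Until the structural reduction of $H$ to order $9$ and the identification of the unique blown-up triangle are actually carried out (this is the content of \cite{LN}), what you have is a plausible outline, not a proof.
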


For further works on this topic, we refer the reader to \cite{LN}.

\section*{Acknowledgements}
We would like to show our gratitude to two anonymous referees for
their invaluable suggestions which largely improve the quality of
this paper, especially for pointing out an error in our original proof of Theorem \ref{ThComponent}.


\end{document}